\renewcommand{\Re}{\operatorname{Re}}
\newlist{Henum}{enumerate}{1}
\setlist[Henum,1]{label=\textbf{H\arabic*}, ref=H\arabic*}
\newtheorem{thm}{Theorem}
\newtheorem{lem}[thm]{Lemma}
\newtheorem{remark}{Remark}
\subjclass[2020]{35R30, 49Q22}
\keywords{Inverse problems, Optimal transport, Stability estimates, Kantorovich duality, Elliptic and parabolic equations}
\begin{document}

\title[OT Stability for Inverse Point Sources]{Optimal-Transport Stability of Inverse Point-Source Problems for Elliptic and Parabolic Equations}


\author{Lingyun Qiu}
\address{Yau Mathematical Sciences Center, Tsinghua University, Beijing,  China.
Yanqi Lake Beijing Institute of Mathematical Sciences and Applications, Beijing,  China.}
\email{lyqiu@tsinghua.edu.cn}
\author{Shenwen Yu}
\address{Department of Mathematical Sciences, Tsinghua University, Beijing,  China}
\email{ysw22@mails.tsinghua.edu.cn}

\begin{abstract}
We establish quantitative global stability estimates, formulated in terms of optimal transport (OT) cost,
for inverse point-source problems governed by elliptic and parabolic equations with spatially varying coefficients.
The key idea is that the Kantorovich dual potential can be represented as a boundary functional of suitable adjoint solutions,
thereby linking OT geometry with boundary observations.
In the elliptic case, we construct complex geometric optics solutions that enforce prescribed pointwise constraints,
whereas in the parabolic case we employ controllable adjoint solutions that transfer interior information to the boundary.
Under mild regularity and separation assumptions, we obtain estimates of the form
\[
\mathcal{T}_c(\mu,\nu) \le C\,\|u_1 - u_2\|_{L^2(\partial\Omega)}
\quad \text{and} \quad
\mathcal{T}_c(\mu,\nu) \le C\,\|u_1 - u_2\|_{L^2(\partial\Omega\times[0,T])},
\]
where $\mu$ and $\nu$ are admissible point-source measures.
These results provide a unified analytical framework connecting inverse source problems and optimal transport,
and establish OT-based stability theory for inverse source problems governed by partial differential equations
with spatially varying coefficients.
\end{abstract}

\maketitle


\section{Introduction}

In recent years, increasing attention has been devoted to environmental pollution, particularly to the transport of contaminants in aquatic and atmospheric media. In this work, we consider the following parabolic equation to model the spatiotemporal distribution of a pollutant:
\begin{equation*}
    \partial_t u(x,t) - \nabla \cdot\Big(\kappa(x) \nabla u(x,t)\Big) + q(x) u(x,t)= F(x,t),
\end{equation*}

where $\kappa(x)$ and $q(x)$ denote the spatially dependent diffusion and reaction coefficients, respectively, and $F$ represents the (possibly) time-dependent source term associated with the pollutant. The primary objective of this study is to establish quantitative stability for the unknown source term $F$ from boundary measurements, leading to an inverse source problem for parabolic equations. As a preliminary step toward the parabolic case, we first examine the associated stationary problem
\begin{equation*}
     - \nabla \cdot\Big(\kappa(x) \nabla u(x)\Big) + q(x) u(x)= F(x),
\end{equation*}
where several analytical ingredients and auxiliary propositions developed for the elliptic framework will subsequently be utilized in the parabolic analysis.

For general source terms, uniqueness for both inverse source problems fails. To ensure identifiability while retaining physical relevance, we restrict attention to source terms represented as finite superpositions of point sources. This point-source model has been extensively studied in the inverse problems literature. 
A seminal contribution was made by A.~El Badia and collaborators~\cite{ElBadiaHa-Duong2000}, who developed an algebraic reconstruction method based on harmonic polynomials $v(x_1,x_2)=(x_1+ix_2)^n$. These functions solve the adjoint equations with constant coefficients, and evaluation of the associated boundary functionals $\mathcal{R}(v)$ yields a linear system from which both source locations and amplitudes can be recovered.
Following this idea, inverse point-source reconstruction has been investigated for a variety of governing equations, including Helmholtz equations~\cite{badia_inverse_2011}, parabolic equations~\cite{ElBadiaHa-Duong2002, andrle_inverse_2015}, and wave equations~\cite{ohe_real-time_2011}, as well as moving-source scenarios~\cite{oheRealtimeReconstructionMoving2020, andrle_identification_2012}. 
The recent work~\cite{qiu_traceability_2025} introduced explicit time-dependent complex geometric optics (CGO) solutions $v(x,t)=e^{\alpha t}e^{\rho\cdot x}$ as test functions, yielding a well-conditioned reconstruction operator and avoiding the unstable rank estimation inherent in the harmonic-polynomial approach. 
Alternative methodologies have also been developed, ranging from direct sampling~\cite{qiu_direct_2024} and boundary integral formulations~\cite{sun_novel_2025} to machine-learning-based approaches~\cite{hu_point_2024} and optimization via data-misfit minimization~\cite{huang_point_2025}, all targeting the reconstruction of inverse point-source models.
While the works mentioned above provide valuable insights, they primarily address uniqueness and reconstruction algorithms; quantitative stability results remain limited. Existing approaches typically rely on explicit test functions available only in constant-coefficient settings, and thus encounter substantial obstacles when extended to partial differential equations (PDEs) with spatially varying parameters.

In this paper we address this more general and technically demanding setting, where $\kappa(x)$ and $q(x)$ are spatially dependent, and we establish global stability estimates expressed in terms of optimal transport (OT) cost. More precisely, given two admissible sources $\mu$ and $\nu$ (elliptic) or two admissible space-time sources (parabolic), we measure their discrepancy via an optimal-transport cost $\mathcal{T}_c(\mu,\nu)$ for a bounded cost function $c(x,y)$. 
Our goal is to bound $\mathcal{T}_c(\mu,\nu)$ by the discrepancy of the corresponding boundary data:
\[
\begin{aligned}
\mathcal{T}_c(\mu,\nu) &\le C\,\|u_1-u_2\|_{L^2(\partial\Omega)}
\quad &&\text{(elliptic)},\\[0.5ex]
\mathcal{T}_c(\mu,\nu) &\le C\,\|u_1-u_2\|_{L^2(\partial\Omega\times[0,T])}
\quad &&\text{(parabolic)}.
\end{aligned}
\]
The mechanism behind these estimates is that the Kantorovich dual potentials can be realized as boundary linear functionals of suitably constructed adjoint solutions. 
In the elliptic case, we employ CGO-based interpolation functions that enforce prescribed pointwise conditions at source locations. In the parabolic case, we design controllable adjoint solutions that transport interior information to the boundary in space-time. These constructions link OT duality with the boundary observation operator and provide a unified analytic route to stability for both stationary and time-dependent problems.

Over the past two decades, OT theory has undergone significant development; see the monographs~\cite{villani_optimal_2009,villani2003topics,alessio_figalli_invitation_2023}. 
Within inverse problems, there has been increasing interest in replacing traditional $L^2$-based misfit with Wasserstein distances in least-squares formulations. As shown in~\cite{engquist_application_2014,engquist_optimal_2022}, OT-based metrics can offer improved robustness and favorable convexity properties with respect to translations and dilations in seismic full waveform inversion.
From a numerical and analytical standpoint, \cite{qiu_full-waveform_2017,qiu_analysis_2021} provide systematic analysis and efficient algorithms for gradient-based methods employing Wasserstein misfits. In \cite{liu_moments_2025}, the authors employ the OT metric to derive error estimates for numerical solutions to inverse initial value problems for parabolic equations. To the best of our knowledge, however, rigorous OT-based stability results for inverse problems have not been available; one of the aims of this work is to fill this gap.

To accommodate spatially varying coefficients, correction terms arise in the construction of CGO solutions, and suitable estimates for these terms become essential. CGO solutions form a fundamental class of test functions in inverse problems: the celebrated work~\cite{sylvester_global_1987} established global uniqueness for the Calder\'on problem; subsequent developments extended CGO constructions to Schr\"odinger-type equations with advection terms~\cite{nakamura_global_1995,salo2004inverse} and to solutions vanishing on parts of the boundary~\cite{kenig_calderon_2007,kenig_calderon_2013}. Our analysis relies on $C^k$-norm estimates for the correction term as in~\cite{bal_inverse_2010}, where such bounds were used to obtain stability in photoacoustic tomography; see also~\cite{qiu_lipschitz_2023,carstea_calderon_2021,feizmohammadi_inverse_2020}. 
Related results for the variable-coefficient Helmholtz equation, obtained under suitable smallness assumptions, can be found in~\cite{bao_recovering_2021}. 
Complementing these CGO-based constructions, a second key ingredient in our analysis is boundary control theory: in the parabolic case, we use boundary controllability to transfer interior information to the boundary and to convert the relevant interior integrals into boundary terms, with quantitative bounds provided by the observability estimates in~\cite{fursikov_controllability_1996}.

The remainder of this paper is organized as follows. In Section~2, we introduce the fundamental concepts of OT and present several key lemmas that will be used later. Sections~3 and~4 are devoted to the main global stability theorems for elliptic and parabolic equations, respectively. In Section~5, we conclude the paper and discuss possible directions for future work. Finally, Section~6 contains the proofs of the lemmas introduced in Section~2.

\section{Preliminaries}
\subsection{Optimal transport and Kantorovich duality}\label{sub: OT}

We recall basic notions from optimal transport required in what follows, including the optimal transport cost, the Kantorovich dual formulation, and a useful reduction of the dual search space. The presentation follows~\cite{villani2003topics}. 
 
\begin{thm}[Optimal transport cost and Kantorovich duality]\label{KV thm}

    Let $X$ and $Y$ be Polish spaces, let $\mu \in P(X)$ and $\nu \in P(Y)$, and let $c: X \times Y \to \mathbb{R}_{+} \cup \{+\infty\}$ be a lower semicontinuous cost function. 

    For $\pi \in P(X \times Y)$ and $(\varphi, \psi) \in L^1(d\mu) \times L^1(d\nu)$, define
    $$
    I[\pi]=\int_{X \times Y} c(x, y) d \pi(x, y), \quad J(\varphi, \psi)=\int_X \varphi d \mu+\int_Y \psi d \nu.
    $$

    Let $\Pi(\mu, \nu)$ denote the set of all Borel probability measures $\pi$ on $X \times Y$ such that for every measurable $A \subset X$ and $B \subset Y$,
    $$
    \pi[A \times Y]=\mu[A], \qquad \pi[X \times B]=\nu[B].
    $$
    The optimal transportation cost between $\mu$ and $\nu$ is then defined by
    $$
    \mathcal{T}_c(\mu, \nu)=\inf_{\pi \in \Pi(\mu, \nu)} I[\pi].
    $$
    Define $\Phi_c$ as the set of all measurable pairs $(\varphi, \psi) \in L^1(d\mu)\times L^1(d\nu)$ satisfying
    $$
    \varphi(x)+\psi(y)\le c(x,y)
    \quad\text{for } \mu\text{-a.e. }x\in X,\ \nu\text{-a.e. }y\in Y.
    $$
    Then the following duality holds:
    \begin{equation}\label{KV problem}
    \inf_{\pi \in \Pi(\mu, \nu)} I[\pi]
    =\sup_{(\varphi,\psi)\in\Phi_c} J(\varphi, \psi).
    \end{equation}
    Moreover, the infimum in the primal problem is attained. 

    If, in addition, there exist nonnegative measurable functions $c_X \in L^1(d\mu)$ and $c_Y \in L^1(d\nu)$ such that
    \begin{equation}\label{maximizer condition}
    c(x, y) \le c_X(x)+c_Y(y)
    \quad \forall (x, y)\in X\times Y,
    \end{equation}
    then the supremum in \eqref{KV problem} is also attained.

\end{thm}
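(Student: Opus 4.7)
The plan is to establish the three claims in turn: existence of an optimal plan, the duality equality, and attainment of the dual supremum under~\eqref{maximizer condition}. For the existence of a minimizer in $\Pi(\mu,\nu)$, I would apply the direct method. Since $X$ and $Y$ are Polish, the marginals $\mu$ and $\nu$ are tight, and it is routine to deduce that $\Pi(\mu,\nu)$ is tight in $P(X\times Y)$. Prokhorov's theorem yields a weakly convergent subsequence $\pi_n\rightharpoonup\pi_\ast$ from any minimizing sequence, and the marginal constraints pass to the limit. Since $c$ is nonnegative and lower semicontinuous, a portmanteau-type result gives weak lower semicontinuity of $\pi\mapsto I[\pi]$, so $I[\pi_\ast]\le\liminf_n I[\pi_n]=\mathcal{T}_c(\mu,\nu)$, proving attainment of the primal infimum.

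For the duality, the easy direction $\sup J\le\inf I$ follows by integrating $\varphi(x)+\psi(y)\le c(x,y)$ against any $\pi\in\Pi(\mu,\nu)$ and invoking the marginal relations. For the nontrivial direction, I would first prove it for bounded continuous $c$ by the Fenchel--Rockafellar theorem on the space $C_b(X\times Y)$: rewriting the primal as the minimization of a linear functional plus a convex indicator enforcing the marginal constraints, one computes the Legendre conjugate and identifies the dual as $\sup J$ over admissible pairs $(\varphi,\psi)\in C_b(X)\times C_b(Y)$ satisfying $\varphi(x)+\psi(y)\le c(x,y)$; the absence of a duality gap follows because the relevant functional is continuous at an interior admissible point (for instance, $\pi=\mu\otimes\nu$). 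For a general nonnegative lsc cost, I would then approximate $c$ from below by an increasing sequence $c_k\in C_b(X\times Y)$, apply the bounded duality to each $c_k$, and pass to the limit: admissible pairs for $c_k$ remain admissible for $c$, while the primal values converge by monotone convergence together with the tightness of $\Pi(\mu,\nu)$.

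For attainment of the dual supremum under~\eqref{maximizer condition}, I would use $c$-transforms: given $(\varphi,\psi)\in\Phi_c$, replace $\psi$ by $\varphi^c(y):=\inf_x\bigl(c(x,y)-\varphi(x)\bigr)$ and then $\varphi$ by $(\varphi^c)^c$. Each substitution preserves admissibility and does not decrease $J$, producing a $c$-concave pair. The envelope bound $c(x,y)\le c_X(x)+c_Y(y)$ guarantees that these $c$-transforms remain in $L^1(d\mu)$ and $L^1(d\nu)$ and supplies dominating functions for Fatou. From a maximizing sequence of normalized $c$-concave pairs one extracts a pointwise (a.e.) subsequential limit using the monotonicity properties of the $c$-transform, and Fatou combined with the envelope shows that the limit pair is admissible and attains the supremum.

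The step I anticipate as the main obstacle is the passage from bounded continuous to merely lsc costs in the duality: one must rule out a duality gap in the limit and justify interchanging the limit with the supremum, which is precisely where the lsc hypothesis on $c$ and the monotone approximation from below are crucially exploited. A secondary technicality is extracting a convergent subsequence of $c$-concave potentials in the attainment step without assuming compactness of $X$ or $Y$; here the integrable envelopes furnished by~\eqref{maximizer condition} play their essential role, replacing the compactness that would otherwise be needed.
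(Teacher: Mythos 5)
This theorem is quoted from Villani's monograph \emph{Topics in Optimal Transportation} as a known preliminary; the paper itself offers no proof, so there is nothing internal to compare against. Your outline is essentially the standard argument from that reference: Prokhorov compactness plus lower semicontinuity for primal attainment, Fenchel--Rockafellar duality for nice costs followed by monotone approximation from below for general lower semicontinuous costs, and $c$-transforms with the integrable envelope $c(x,y)\le c_X(x)+c_Y(y)$ for dual attainment. The strategy is sound. One technical point deserves more care than your sketch gives it: for non-compact Polish spaces the topological dual of $C_b(X\times Y)$ is strictly larger than the space of Radon probability measures (it contains finitely additive functionals), so applying Fenchel--Rockafellar ``directly on $C_b(X\times Y)$'' does not immediately produce a transference plan. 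The standard remedy, which is the route taken in the cited source, is to first prove the duality on compact spaces and then pass to the Polish case via tightness of $\Pi(\mu,\nu)$ and restriction to large compact subsets; your plan should incorporate that intermediate reduction. Also, the continuity hypothesis in Fenchel--Rockafellar must be checked at a point of the primal Banach space $C_b(X\times Y)$ (e.g.\ a constant function), not at a measure such as $\mu\otimes\nu$ as your phrasing suggests. With these adjustments the argument is the classical one and is correct.
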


In our setting, $X=Y$ is either $\Omega$ or $Q$, depending on whether the underlying problem is elliptic or parabolic. Throughout, we assume that the cost function $c:X\times X\to\mathbb{R}$ is bounded. This boundedness allows reduction of the dual search space, as noted in~\cite[Remark~1.13]{villani2003topics} and formalized below. We also observe that boundedness of $c$ implies~\eqref{maximizer condition}, ensuring existence of a maximizer in the dual formulation.

\begin{lem}\label{restrict lem}

    Let $\|c\|_\infty=\sup_{x,y}c(x,y)$ and define
    \[
    \widetilde{\Phi}_c=\{(\varphi,\psi)\in\Phi_c:\ 0\le \varphi\le \|c\|_\infty,\ -\|c\|_\infty\le \psi\le 0\}.
    \]
    Then
    \[
    \max\{J(\varphi,\psi):(\varphi,\psi)\in\Phi_c\}
    =\max\{J(\varphi,\psi):(\varphi,\psi)\in\widetilde{\Phi}_c\}.
    \]
\end{lem}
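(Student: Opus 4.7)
My plan is to prove only the nontrivial inequality $\sup_{\Phi_c} J \le \sup_{\widetilde{\Phi}_c} J$; the reverse is immediate from the inclusion $\widetilde{\Phi}_c \subset \Phi_c$. It suffices to take any $(\varphi,\psi) \in \Phi_c$ and construct $(\hat\varphi,\hat\psi) \in \widetilde{\Phi}_c$ with $J(\hat\varphi,\hat\psi) \ge J(\varphi,\psi)$. Combined with attainment on $\Phi_c$ guaranteed by Theorem~\ref{KV thm} (since boundedness of $c$ certifies \eqref{maximizer condition}), this upgrades the supremum equality to the maximum equality stated.

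The first step is the double $c$-conjugation trick. I would define
\[
\psi^c(y) := \inf_{x \in X}\bigl(c(x,y) - \varphi(x)\bigr), \qquad \varphi^{cc}(x) := \inf_{y \in Y}\bigl(c(x,y) - \psi^c(y)\bigr).
\]
From $\varphi(x)+\psi(y) \le c(x,y)$ it follows that $\psi \le \psi^c$ $\nu$-a.e.\ and $\varphi \le \varphi^{cc}$ $\mu$-a.e., so $J$ does not decrease, while $\varphi^{cc}+\psi^c \le c$ now holds pointwise on all of $X \times Y$.

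The second step is to extract sharp range bounds from $0 \le c \le M := \|c\|_\infty$. Let $s := \sup_y \psi^c(y)$; this is finite because $\psi^c(y) \le c(x_0,y) - \varphi(x_0) \le M - \varphi(x_0)$ for any $x_0$ where $\varphi$ is finite, while the degenerate case $\psi^c \equiv -\infty$ forces $J(\varphi,\psi) = -\infty$ and is trivial. Using $c \ge 0$, the definition of $\varphi^{cc}$ immediately gives $\varphi^{cc}(x) \ge -\sup_y \psi^c(y) = -s$, and using $c \le M$ gives $\varphi^{cc}(x) \le M - \psi^c(y)$ for every $y$, hence $\varphi^{cc}(x) \le M - s$. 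A standard oscillation estimate for $c$-conjugates — obtained by taking a near-minimizer $x_\epsilon$ of the infimum defining $\psi^c(y_2)$ and inserting it into the bound for $\psi^c(y_1)$, which produces $\psi^c(y_1) - \psi^c(y_2) \le c(x_\epsilon,y_1) - c(x_\epsilon,y_2) + \epsilon \le M + \epsilon$ — yields $\operatorname{osc}(\psi^c) \le M$, hence $s - M \le \psi^c \le s$.

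Finally, I would shift by the constant $s$: set $\hat\varphi := \varphi^{cc} + s$ and $\hat\psi := \psi^c - s$. Since $\mu$ and $\nu$ are probability measures, $J$ is invariant under this shift, and $\hat\varphi + \hat\psi = \varphi^{cc} + \psi^c \le c$ preserves admissibility. The range bounds from the previous step translate to $0 \le \hat\varphi \le M$ and $-M \le \hat\psi \le 0$, so $(\hat\varphi,\hat\psi) \in \widetilde{\Phi}_c$, and the chain $J(\hat\varphi,\hat\psi) = J(\varphi^{cc},\psi^c) \ge J(\varphi,\psi)$ closes the argument. The step I expect to be most delicate is managing integrability and measurability of the infima defining $\psi^c$ and $\varphi^{cc}$ when $\varphi, \psi$ only lie in $L^1$ — in particular, ruling out $-\infty$ values on sets of positive measure and ensuring the inf of an uncountable family remains measurable — but these are standard issues handled by choosing a measurable representative and separately discarding the trivial case $J(\varphi,\psi) = -\infty$.
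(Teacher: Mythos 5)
Your proposal is correct, and it is essentially the canonical argument: double $c$-conjugation to improve the pair without decreasing $J$, an oscillation bound $\operatorname{osc}(\psi^c)\le\|c\|_\infty$ from $0\le c\le\|c\|_\infty$, and a constant shift (harmless since $\mu,\nu$ are probability measures) to land in $\widetilde{\Phi}_c$. The paper does not supply its own proof of Lemma~\ref{restrict lem} but instead cites \cite[Remark~1.13]{villani2003topics}, which rests on exactly this construction; the caveats you flag (taking the infima over full-measure sets so that the $\mu$-a.e./$\nu$-a.e.\ constraint really yields $\psi\le\psi^c$ and $\varphi\le\varphi^{cc}$, and measurability of the conjugates) are genuine but standard and are resolved as you indicate.
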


A classical example of a cost function is $c(x,y)=d(x,y)^p$ on a metric space $(X,d)$ with $p\ge1$. In this case, the optimal transport cost induces the  Wasserstein-$p$ distance:
\[
W_p(\mu,\nu)=\Bigl(\inf_{\pi\in\Pi(\mu,\nu)}\int_{X\times X}d(x,y)^p\,d\pi\Bigr)^{1/p},
\qquad \mu,\nu\in\mathcal{P}_p(X),
\]
where $\mathcal{P}_p(X)$ denotes the set of probability measures on $X$ with finite $p$-th moments.

\subsection{CGO solutions and test function spaces}

We now introduce the test function spaces used throughout the analysis. These consist of adjoint solutions and will later be specialized using CGO constructions.

For the elliptic case, define
\[
\mathcal{V}:=\{v\in H^2(\Omega): -\nabla\cdot(\kappa(x)\nabla v)+q(x)v=0 \text{ in }\Omega\}.
\]
For the parabolic case, we use time-dependent functions
\[
\mathcal{V}_t:=\{v\in L^2(0,T;H^1(\Omega)):\ \partial_t v+\Delta v-qv=0\text{ in }Q\}.
\]

A key step in the reconstruction is the construction of test functions $\{v_j\}_{j=1}^n\subset \mathcal{V}$ associated with distinct points $\{s_j\}_{j=1}^n\subset\Omega$, satisfying the interpolation property
\[
v_j(s_\ell)=\delta_{j\ell}.
\]
The following lemma ensures the existence of auxiliary functions with linearly independent point evaluations.

\begin{lem}\label{invertible matrix thm}
      For any collection of \( n \) distinct points \( s_1, \ldots, s_n \in \Omega \), let
\[
\eta_1 := \min_{i \neq j} |s_i - s_j|, \quad \eta_2 := \min_{1 \leq j \leq n} |s_j|.
\]
Then there exist \( n \) test functions \( \tilde{v}_1(x), \ldots, \tilde{v}_n(x) \in \mathcal{V} \) such that the matrix \( A = [a_{l,j}]_{1 \leq l,j \leq n} \), defined by
\begin{equation} \label{matrix A}
    a_{l,j} := \tilde{v}_j(s_l),
\end{equation}
is invertible. Moreover, the following estimates hold:
\begin{equation} \label{bound of A}
    \|A^{-1}\|_2 \leq \frac{\exp(-\tilde{r}\eta_2^2)}{1 - n\left( \exp\left(-\frac{\tilde{r} \eta_1^2}{2}\right) + \frac{C_1 \|\tilde{q}\|_{H^p(\Omega)}}{\sqrt{2} \tilde{r} \eta_2} \right)},
\end{equation}
and
\begin{equation} \label{bound of patial v}
    \max_{1 \leq j \leq n} \left\{ \|\tilde{v}_j\|_{H^1(\Omega)} \right\} \leq C_2 \tilde{r} e^{\tilde{r} R_0^2} \left( 1 + \frac{C_1 \|\tilde{q}\|_{H^p(\Omega)}}{\sqrt{2} \tilde{r} \eta_2} \right),
\end{equation}
where \( C_1 = C_1(\Omega, p) \) and \( C_2 = C_2(\Omega, d) \) are constants independent of the point set \( \{s_j\}_{1 \leq j \leq n} \), $R_0=\max_{x\in \overline{\Omega}}|x|$ and
\[
\tilde{r} := 2n \left( \frac{2}{\eta_1^2} + \frac{1 + C_1 \|\tilde{q}\|_{H^p(\Omega)}}{\sqrt{2} \eta_2} \right), \quad \tilde{q}(x) := \frac{q(x)}{\kappa(x)} + \frac{\Delta \sqrt{\kappa(x)}}{\sqrt{\kappa(x)}}.
\]
\end{lem}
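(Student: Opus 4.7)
The plan is to realize the $\tilde v_j$ as complex geometric optics (CGO) solutions of the Schrödinger equation obtained from $-\nabla\cdot(\kappa\nabla v)+qv=0$ via the Liouville substitution $v=w/\sqrt\kappa$, which turns the equation into $-\Delta w+\tilde q\,w=0$ with $\tilde q$ as in the statement (valid for $d\ge 2$). The whole estimate is then driven by the polar identity
$$
\tilde r\,s_j\cdot s_l=\tfrac{\tilde r}{2}|s_j|^2+\tfrac{\tilde r}{2}|s_l|^2-\tfrac{\tilde r}{2}|s_j-s_l|^2,
$$
which will eventually make the leading-order matrix $A=D^{(2)}\tilde M D^{(1)}$ factor with $\tilde M$ close to the identity and the diagonals $D^{(1)},D^{(2)}$ absorbing the prefactor $e^{-\tilde r\eta_2^2}$.

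For each $j$ I would choose $\rho_j:=\tilde r\,(s_j+i\beta_j)$ with $\beta_j\in\mathbb R^d$ satisfying $\beta_j\perp s_j$ and $|\beta_j|=|s_j|$, so that $\rho_j\cdot\rho_j=0$ and $|\rho_j|=\sqrt 2\,\tilde r|s_j|\ge\sqrt 2\,\tilde r\eta_2$. I then set $w_j(x):=e^{\rho_j\cdot x}(1+r_j(x))$ and $\tilde v_j:=w_j/\sqrt\kappa$; the correction $r_j$ is produced by solving the conjugated equation
$$
-\Delta r_j-2\rho_j\cdot\nabla r_j+\tilde q\,r_j=-\tilde q
$$
through the Faddeev Green operator $G_{\rho_j}$. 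The classical $H^p\to H^p$ estimates for $G_{\rho_j}$ (as used in~\cite{bal_inverse_2010,qiu_lipschitz_2023}) together with Sobolev embedding $H^p(\Omega)\hookrightarrow L^\infty(\Omega)$ for $p>d/2$ give a uniform bound
$$
\|r_j\|_{L^\infty(\Omega)}\le\frac{C_1\,\|\tilde q\|_{H^p(\Omega)}}{|\rho_j|}\le\frac{C_1\,\|\tilde q\|_{H^p(\Omega)}}{\sqrt 2\,\tilde r\,\eta_2},
$$
which supplies the $\|\tilde q\|_{H^p}$-term in the denominator of \eqref{bound of A}.

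Using $\beta_j\cdot s_j=0$ and the polar identity, the entries $A_{lj}=\tilde v_j(s_l)$ factor as
$$
A_{lj}=\underbrace{\frac{e^{\tilde r|s_l|^2/2}}{\sqrt{\kappa(s_l)}}}_{D^{(2)}_{ll}}\cdot\underbrace{e^{-\tilde r|s_j-s_l|^2/2}\,e^{i\tilde r\beta_j\cdot s_l}\,(1+r_j(s_l))}_{\tilde M_{lj}}\cdot\underbrace{e^{\tilde r|s_j|^2/2}}_{D^{(1)}_{jj}}.
$$
Because the phase $e^{i\tilde r\beta_j\cdot s_j}=1$ vanishes exactly on the diagonal, $\tilde M_{jj}=1+r_j(s_j)$, while the off-diagonal entries of $\tilde M$ are bounded by $e^{-\tilde r\eta_1^2/2}(1+\|r_j\|_\infty)$. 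A Neumann-series argument around the identity then gives
$$
\|\tilde M^{-1}\|\le\Bigl(1-n\bigl(e^{-\tilde r\eta_1^2/2}+C_1\|\tilde q\|_{H^p}/(\sqrt 2\,\tilde r\,\eta_2)\bigr)\Bigr)^{-1},
$$
and the precise value of $\tilde r$ fixed in the statement is calibrated exactly so that this denominator is strictly positive. Combined with $\|D^{(1),-1}\|\,\|D^{(2),-1}\|\le e^{-\tilde r\eta_2^2}$ (absorbing a harmless $\|\kappa\|_\infty^{1/2}$ factor into the constant), this yields \eqref{bound of A}. The $H^1$ bound \eqref{bound of patial v} follows by direct differentiation: $|e^{\rho_j\cdot x}|=e^{\tilde r\,s_j\cdot x}\le e^{\tilde r|s_j|R_0}\le e^{\tilde r R_0^2}$, $|\rho_j|\le\sqrt 2\,\tilde r R_0$, and the $H^1$-norm of $r_j$ inherits the same Faddeev scaling.

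The main technical obstacle I expect is precisely the step of upgrading the Faddeev $L^2\to L^2$ mapping property of $G_{\rho_j}$ to a uniform pointwise bound $\|r_j\|_{L^\infty}\lesssim\|\tilde q\|_{H^p}/|\rho_j|$ with the $1/|\rho_j|$ decay preserved. This is what couples $\|\tilde q\|_{H^p}$ to $1/(\sqrt 2\,\tilde r\,\eta_2)$ in \eqref{bound of A} and ultimately forces the lower bound on $\tilde r$; once this pointwise estimate and the choice of $\rho_j$ are in place, the remainder of the proof is algebraic.
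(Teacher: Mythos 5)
Your proposal follows essentially the same route as the paper: CGO solutions after the Liouville substitution, phases $\rho_j=\tilde r s_j+\mathrm{i}b_j$ with $b_j\perp s_j$ and $|b_j|=\tilde r|s_j|$, the polar identity to factor $A$ as diagonal $\times$ (near-identity) $\times$ diagonal, the $C^0$-bound $\|\psi_j\|_\infty\le C_1\|\tilde q\|_{H^p}/|\rho_j|$ on the correction, and a perturbation bound on the smallest singular value (your Neumann series is equivalent to the paper's $\beta(A+B)\ge\beta(A)-\|B\|_2$ argument). The construction, the role of $\tilde r$, the $H^1$ estimate, and the identified technical crux all match the paper's proof, so the proposal is correct.
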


Lemma~\ref{invertible matrix thm} will be proved in ~\ref{appendix:proof of lemma3}, where CGO solutions are used in the construction.

\subsection{Boundary control theory}

To eliminate interior contributions arising in the parabolic setting, we make use of boundary controllability results, whose proof is deferred to ~\ref{appendix:proof of lemma4}. Relying on the observability estimates in~\cite{fursikov_controllability_1996}, we obtain the following lemma.

\begin{lem}\label{lemma:control estimate}
Suppose $v\in \mathcal{V}_t$. Then there exists a boundary control 
\[
\omega\in L^2\bigl(T^*,T;H^{1/2}(\partial\Omega)\bigr)
\]
such that the system
\begin{equation}\label{control system}
\left\{
\begin{aligned}
&\partial_t \psi + \Delta \psi - q\psi = 0 && \text{in } [T^*,T]\times\Omega,\\
&\frac{\partial\psi}{\partial n} = \omega && \text{on } [T^*,T]\times\partial\Omega,\\
&\psi(\cdot,T^*) = v(\cdot,T^*) && \text{in } \Omega,\\
&\psi(\cdot,T)=0 && \text{in } \Omega
\end{aligned}
\right.
\end{equation}
admits a solution $\psi$. Moreover, the control satisfies the estimate
\begin{equation}\label{control estimate}
\|\omega\|_{L^2(\Sigma^+)}
\le C(\Omega,q)\,\|v(\cdot,T)\|_{H^1(\Omega)}
+ \left\|\tfrac{\partial v}{\partial n}\right\|_{L^2(\Sigma^+)}.
\end{equation}
\end{lem}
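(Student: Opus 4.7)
The plan is to reduce Lemma~\ref{lemma:control estimate} to a classical boundary null-controllability problem for the heat equation, which can then be handled by the Fursikov--Imanuvilov observability estimates.

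First I would substitute $w := \psi - v$. Subtracting the equations satisfied by $\psi$ and $v$, the difference $w$ solves the same adjoint equation $\partial_t w + \Delta w - q w = 0$ in $[T^*,T]\times\Omega$, with homogeneous initial datum $w(\cdot,T^*) = 0$, Neumann trace $\partial_n w = \omega - \partial_n v =: \tilde\omega$, and required terminal state $w(\cdot,T) = -v(\cdot,T)$. Hence it suffices to produce a control $\tilde\omega$ that steers $w$ from $0$ at $t = T^*$ to $-v(\cdot,T)$ at $t = T$ and satisfies
\[
\|\tilde\omega\|_{L^2(\Sigma^+)} \le C(\Omega,q)\,\|v(\cdot,T)\|_{H^1(\Omega)};
\]
the stated estimate then follows from the triangle inequality $\|\omega\|_{L^2(\Sigma^+)} \le \|\tilde\omega\|_{L^2(\Sigma^+)} + \|\partial_n v\|_{L^2(\Sigma^+)}$, and the recovery $\psi = w + v$ automatically lies in the correct parabolic class.

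Next I would reverse time via $\tau := T - t$ and $\tilde w(\tau,x) := w(T-\tau,x)$, transforming the backward adjoint equation into the standard forward heat equation $\partial_\tau \tilde w - \Delta \tilde w + q\tilde w = 0$ on $[0,T-T^*]\times\Omega$, with initial datum $\tilde w(\cdot,0) = -v(\cdot,T)$ and prescribed null terminal state $\tilde w(\cdot,T-T^*) = 0$. This is precisely a Neumann boundary null-controllability problem, to which the Carleman-estimate theory of \cite{fursikov_controllability_1996} applies. Writing the dual backward adjoint system and applying the global Carleman inequality, I would obtain the observability estimate for the dual variable in terms of its boundary trace; by the Hilbert Uniqueness Method this observability inequality is equivalent to the existence of a minimal $L^2$-norm boundary control $\tilde\omega$ driving $\tilde w$ to rest in time $T-T^*$, with quantitative cost controlled by the norm of the target datum $-v(\cdot,T)$.

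The main obstacle will be calibrating the right-hand side of the control estimate to exactly the $H^1(\Omega)$ norm of $v(\cdot,T)$ and justifying the claimed spatial regularity $\omega \in L^2(T^*,T;H^{1/2}(\partial\Omega))$. A direct application of null controllability from $L^2$ data gives only $\|\tilde\omega\|_{L^2(\Sigma^+)} \le C\|v(\cdot,T)\|_{L^2(\Omega)}$; promoting this to the $H^1$ bound requires a refined use of the Carleman weights together with standard parabolic energy estimates that exploit the intrinsic smoothing of the forward heat semigroup. The $H^{1/2}$ trace regularity, in turn, is obtained from parabolic well-posedness in the appropriate weighted Sobolev framework once the control is selected from the range of the HUM operator, since $-v(\cdot,T) \in H^1(\Omega)$ already lies in the reachable space of slightly smoother data. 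Once these two refinements are in place, the substitution $\omega = \tilde\omega + \partial_n v$ and the triangle inequality immediately yield the claimed bound.
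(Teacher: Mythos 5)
Your opening reduction is exactly the one the paper uses: setting $w=\psi-v$ (the paper writes $\psi=\psi_1+v$) isolates a null-control problem for a solution starting from $0$ at $t=T^*$ and required to hit $-v(\cdot,T)$ at $t=T$, after which the triangle inequality produces the extra term $\|\partial v/\partial n\|_{L^2(\Sigma^+)}$. Where you diverge is in how that control is produced, and this is where the gap sits. You propose a direct Neumann boundary null-controllability argument via Carleman estimates and HUM, and you yourself flag the two delicate points — the spatial regularity $\omega\in L^2(T^*,T;H^{1/2}(\partial\Omega))$ and the form of the bound — but you do not actually resolve them. The HUM control of minimal $L^2(\Sigma^+)$-norm is, a priori, only an element of $L^2(\Sigma^+)$; it does not automatically lie in $L^2(T^*,T;H^{1/2}(\partial\Omega))$, and the assertion that this follows ``once the control is selected from the range of the HUM operator'' is precisely the statement that needs proof. (A smaller slip: you describe promoting an $L^2(\Omega)$ bound on the data to an $H^1(\Omega)$ bound as an obstacle, but since $\|v(\cdot,T)\|_{L^2(\Omega)}\le \|v(\cdot,T)\|_{H^1(\Omega)}$ an $L^2$-cost estimate would already imply the stated one; the norm calibration is not the issue — the regularity of the control is.)

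The paper sidesteps this entirely with an extension--restriction construction that you are missing. It extends $\Omega$ to a larger domain $\Omega_1$, extends the coefficients and the target datum $v_0=-v(\cdot,T)$ (with $\|v_1\|_{H^1(\Omega_1)}\le C\|v_0\|_{H^1(\Omega)}$), and applies the \emph{distributed} null-controllability theorem of Fursikov--Imanuvilov with control supported in $\Omega_0\subset\Omega_1\setminus\overline\Omega$. The controlled solution $y$ then solves the homogeneous equation on $\Omega$ and, crucially, comes with the regularity estimate $\|y\|_{L^2(T^*,T;H^2(\Omega_1))}\le C\|v_1\|_{H^1(\Omega_1)}$; the boundary control is simply its Neumann trace, so the trace theorem $H^2(\Omega)\to H^{1/2}(\partial\Omega)$ delivers both the claimed regularity and the bound by $\|v(\cdot,T)\|_{H^1(\Omega)}$ in one stroke. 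To make your route rigorous you would either need to adopt this trick, or invoke the refined weighted-space theory that produces boundary controls with interior $H^{2,1}$-regularity of the controlled state — in which case you are essentially reconstructing the same argument. As written, the key quantitative content of the lemma is deferred rather than proved.
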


\section{Case for time-independent equations}

\subsection{Problem formulation}

We first consider the inverse source problem associated with the stationary elliptic equation
\begin{equation}\label{directmodel}
\left\{
\begin{aligned}
-\nabla\!\cdot\!\bigl(\kappa(x)\nabla u\bigr) + q(x)\,u &= \sum_{j=1}^{m} a_j\,\delta(x-s_j) && \text{in }\Omega,\\
\frac{\partial u}{\partial n} &= 0 && \text{on }\partial\Omega,
\end{aligned}
\right.
\end{equation}
where $\Omega\subset\mathbb{R}^d$ is a smooth bounded domain. We assume $\kappa\in H^{p+2}(\Omega)\cap L^\infty(\Omega)$, $q\in H^{p}(\Omega)\cap L^\infty(\Omega)$, and $p>1+\frac{d}{2}$. In addition, $\kappa$ is strictly positive on $\overline\Omega$. The available data consist of boundary measurements of the solution, that is, $u|_{\partial\Omega}$.

We impose the following assumptions on the source term:
\begin{Henum}
\item\label{H1}
The point locations $s_j\in\mathring{\Omega}$ are distinct and the number of points satisfies $m\le M$ for a prescribed integer $M$.
\item\label{H2}
The amplitudes satisfy $a_j>0$ and $\sum_{j=1}^m a_j = 1$.
\end{Henum}

Under~\ref{H1}-\ref{H2}, the measure
\[
\mu = \sum_{j=1}^{m} a_j\delta(x-s_j)
\]
defines a probability measure on $\Omega$. We denote by
\[
\Theta_M := \Bigl\{\mu=\textstyle\sum_{j=1}^{m} a_j \delta(x-s_j): \mu \text{ satisfies \ref{H1}--\ref{H2}}\Bigr\}
\]
the collection of all admissible source measures.

Let $\mu,\nu\in\Theta_M$, and write
\[
\mu=\sum_{s\in N} a_s \delta(x-s),\qquad 
\nu=\sum_{s'\in N'} b_{s'} \delta(x-s'),
\]
where $N,N'$ are finite subsets of $\mathring{\Omega}$. Let $u_1$ and $u_2$ denote the solutions to~\eqref{directmodel} corresponding to $\mu$ and $\nu$, respectively.

Given a bounded cost function $c(x,y)$, our objective is to establish a global stability estimate of the form
\begin{equation}\label{stability}
\mathcal{T}_c(\mu,\nu) \le C\,\|u_1-u_2\|_{L^2(\partial\Omega)},
\end{equation}
where $\mathcal{T}_c$ denotes the optimal-transport cost associated with $c$.

\subsection{Construction of test functions}

For any \(v\in\mathcal{V}\), define the linear functionals
\begin{equation}\label{Definition of Rv}
\mathcal{R}_i(v):=\int_{\partial\Omega}\kappa\,\frac{\partial v}{\partial n}\,u_i\,ds,\qquad i=1,2,
\end{equation}
where $u_i$ solves~\eqref{directmodel} with source $\mu$ (for $i=1$) or $\nu$ (for $i=2$).
Multiplying~\eqref{directmodel} by $v\in\mathcal{V}$ and integrating by parts yields the following identity.

\begin{thm}\label{Rv duality thm}
For all \(v\in\mathcal{V}\),
\[
\mathcal{R}_1(v)=\sum_{s\in N}a_s\,v(s)=\int_\Omega v\,d\mu,\qquad
\mathcal{R}_2(v)=\sum_{s'\in N'}b_{s'}\,v(s')=\int_\Omega v\,d\nu.
\]
\end{thm}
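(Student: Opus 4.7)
The plan is a direct duality calculation: multiply equation~\eqref{directmodel} by an adjoint test function $v\in\mathcal{V}$ and integrate by parts twice, using the homogeneous Neumann condition on $u_i$ to eliminate one boundary contribution and the equation $-\nabla\!\cdot\!(\kappa\nabla v)+qv=0$ to eliminate the interior adjoint integral. What remains is $\mathcal{R}_i(v)$ on one side and the distributional pairing $\langle\mu,v\rangle = \sum_{s\in N}a_s v(s)$ on the other.

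First I would invoke interior elliptic regularity: since $\kappa\in H^{p+2}(\Omega)$ and $q\in H^p(\Omega)$ with $p>1+\tfrac{d}{2}$, any solution of the adjoint equation belonging to $\mathcal{V}$ lies in $C^2_{\mathrm{loc}}(\Omega)$, so $v(s_j)$ is well defined. Because $u_i$ has Dirac singularities at the source points, the integration by parts is carried out on the punctured domain
\[
\Omega_\epsilon := \Omega \setminus \bigcup_{s\in N}\overline{B_\epsilon(s)},
\]
on which $u_i$ is smooth and solves the homogeneous equation. Green's second identity on $\Omega_\epsilon$ reads
\[
\int_{\Omega_\epsilon}\!\Bigl[v\bigl(-\nabla\!\cdot\!(\kappa\nabla u_i)+qu_i\bigr) - u_i\bigl(-\nabla\!\cdot\!(\kappa\nabla v)+qv\bigr)\Bigr]dx
= \int_{\partial\Omega_\epsilon}\kappa\bigl(u_i\partial_n v - v\,\partial_n u_i\bigr)ds,
\]
whose left-hand side vanishes identically on $\Omega_\epsilon$. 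Decomposing $\partial\Omega_\epsilon = \partial\Omega \cup \bigcup_{s\in N}\partial B_\epsilon(s)$, the Neumann condition $\partial_n u_i|_{\partial\Omega}=0$ reduces the outer contribution to $\int_{\partial\Omega}\kappa\,u_i\,\partial_n v\,ds = \mathcal{R}_i(v)$.

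The inner-sphere contributions are handled by a local parametrix argument. Near each source point $s$, $u_i$ admits the decomposition $u_i(x) = a_s G(x,s) + r_s(x)$, where $G$ is a local fundamental solution of $-\nabla\!\cdot\!(\kappa\nabla\cdot)+q$ whose leading singularity is of order $\kappa(s)^{-1}|x-s|^{2-d}$ (logarithmic in $d=2$), and $r_s\in H^2_{\mathrm{loc}}$. Since $v$ is $C^2$ near $s$, $\int_{\partial B_\epsilon(s)}\kappa\,u_i\,\partial_n v\,ds = O(\epsilon)$, while the classical Dirac-flux computation for the singular part yields
\[
\lim_{\epsilon\to 0}\int_{\partial B_\epsilon(s)}\kappa\,v\,\partial_n u_i\,ds = a_s v(s),
\]
where the inner-normal orientation is taken into account. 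Summing over $s\in N$ and letting $\epsilon\to 0$ produces $0 = \mathcal{R}_i(v) - \sum_{s\in N} a_s v(s)$, which is the claimed identity for $i=1$; the case $i=2$ is identical with $\mu$ replaced by $\nu$.

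The main obstacle is the inner-sphere limit, which depends on the precise leading singularity of the variable-coefficient Green's function of $-\nabla\!\cdot\!(\kappa\nabla\cdot)+q$. Once a parametrix with a smooth $H^2_{\mathrm{loc}}$ remainder is in hand—an essentially standard construction that uses the assumed Sobolev regularity of $\kappa$ and $q$—the convergence of the surface integrals and cancellation of boundary terms are immediate, and the duality identity follows.
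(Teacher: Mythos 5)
Your argument is correct and follows the same route the paper takes: the paper justifies this identity in one line by multiplying \eqref{directmodel} by $v\in\mathcal{V}$ and integrating by parts, using the Neumann condition on $u_i$ and the adjoint equation for $v$ to leave only the boundary term $\mathcal{R}_i(v)$ and the pairing $\sum_s a_s v(s)$. Your punctured-domain Green's identity with the parametrix decomposition of $u_i$ near each source is simply the rigorous version of that formal computation, and the signs and limits in your inner-sphere calculation check out.
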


The functionals $\mathcal{R}_i$ have been used in earlier reconstruction approaches (see, e.g.,~\cite{ElBadiaHa-Duong2000, qiu_traceability_2025}) to form linear systems for recovering point-source parameters. Their structure also mirrors the Kantorovich dual formulation and therefore provides a natural bridge to optimal-transport stability estimates.

Indeed, if $(\varphi,\psi)\in\widetilde{\Phi}_c$ and both functions belong to $\mathcal{V}$, then
\begin{equation}\label{Rv and J}
J(\varphi,\psi)=\mathcal{R}_1(\varphi)+\mathcal{R}_2(\psi).
\end{equation}
For general $(\varphi,\psi)\in\widetilde{\Phi}_c$, relation~\eqref{Rv and J} may fail since $\varphi,\psi$ need not lie in $\mathcal{V}$. However, due to the atomic structure of $\mu$ and $\nu$, it suffices to construct functions $v_\varphi,v_\psi\in\mathcal{V}$ satisfying
\begin{equation} \label{vvarphi and vpsi}
v_\varphi(s) = \varphi(s) \quad \forall s \in N, \qquad 
v_\psi(s') = \psi(s') \quad \forall s' \in N'.
\end{equation}
Under this condition, the identity
\[
J(\varphi, \psi) = \mathcal{R}_1(v_\varphi) + \mathcal{R}_2(v_\psi)
\]
holds. The existence of \(v_\varphi\) and \(v_\psi\) is shown in Theorem \ref{two v thm}.

To construct such interpolating functions, we first build a basis $\{v_s\}_{s\in N}\subset\mathcal{V}$ satisfying
\[
v_s(s')=
\begin{cases}
1 & \text{if } s=s',\\
0 & \text{if } s\ne s'.
\end{cases}
\]
Lemma~\ref{invertible matrix thm} guarantees the existence of auxiliary CGO-based functions with linearly independent evaluations, from which the interpolating basis can be constructed. Moreover, the estimates of $\|\tilde{v}\|_{H^1(\Omega)}$ and $\|A^{-1}\|_2$ provide bounds on the norms of these basis functions, which will be used in establishing the main stability result.

\begin{thm}\label{basis thm}
There exist functions $v_1,\dots,v_n\in\mathcal{V}$ such that
\begin{equation}\label{delta condition}
v_i(s_j) = \delta_{ij} \quad \text{for all } 1 \leq i, j \leq n.
\end{equation}
and
\begin{equation}\label{Bound of v}
\max_{1\le i\le n}\|v_i\|_{H^1(\Omega)}
\le C_3\,\sqrt{n}\,\tilde r\,e^{\tilde r(R_0^2-\eta_2^2)},
\end{equation}
where $C_3=C_3(\Omega,d)$ is independent of the point set \( \{s_j\}_{1 \leq j \leq n} \).
\end{thm}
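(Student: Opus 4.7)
The plan is to reduce the construction of the interpolating basis $\{v_i\}$ to the auxiliary family $\{\tilde v_j\}\subset\mathcal{V}$ produced by Lemma~\ref{invertible matrix thm} via a linear change of basis through $A^{-1}$. More precisely, let $\tilde v_1,\dots,\tilde v_n\in\mathcal{V}$ be the auxiliary CGO-based functions from Lemma~\ref{invertible matrix thm}, and let $A=[a_{l,j}]$ with $a_{l,j}=\tilde v_j(s_l)$. Since $\mathcal{V}$ is a linear subspace of $H^2(\Omega)$, any linear combination of the $\tilde v_j$ remains in $\mathcal{V}$. Set
\[
v_i := \sum_{j=1}^{n} (A^{-1})_{ji}\,\tilde v_j.
\]
Then $v_i(s_l)=\sum_j(A^{-1})_{ji}a_{l,j}=(AA^{-1})_{li}=\delta_{il}$, which gives~\eqref{delta condition} immediately.

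The remaining work is the norm bound~\eqref{Bound of v}. First I would apply Cauchy--Schwarz on the defining sum to write
\[
\|v_i\|_{H^1(\Omega)}
\le \Bigl(\sum_{j=1}^n |(A^{-1})_{ji}|^2\Bigr)^{\!1/2}\!\sqrt{n}\,\max_{1\le j\le n}\|\tilde v_j\|_{H^1(\Omega)}
\le \|A^{-1}\|_2\,\sqrt{n}\,\max_{1\le j\le n}\|\tilde v_j\|_{H^1(\Omega)},
\]
using that the $i$-th column of $A^{-1}$ has $\ell^2$-norm bounded by $\|A^{-1}\|_2$. The two factors on the right are now precisely the quantities estimated in~\eqref{bound of A} and~\eqref{bound of patial v}.

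The only nontrivial step is showing that the denominator in~\eqref{bound of A} stays bounded below by a positive constant independent of $n$ and of the configuration $\{s_j\}$; this is what turns the raw bound into the clean form $C_3\sqrt{n}\,\tilde r\,e^{\tilde r(R_0^2-\eta_2^2)}$. The key observation is that the specific choice
\[
\tilde r = 2n\Bigl(\tfrac{2}{\eta_1^2}+\tfrac{1+C_1\|\tilde q\|_{H^p(\Omega)}}{\sqrt{2}\,\eta_2}\Bigr)
\]
is engineered exactly so that $\tilde r\eta_1^2/2\ge 2n$ and $\frac{C_1\|\tilde q\|_{H^p(\Omega)}}{\sqrt{2}\,\tilde r\eta_2}\le \frac{1}{2n}$. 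Consequently,
\[
n\Bigl(\exp(-\tfrac{\tilde r\eta_1^2}{2})+\tfrac{C_1\|\tilde q\|_{H^p(\Omega)}}{\sqrt{2}\,\tilde r\eta_2}\Bigr)
\le n e^{-2n}+\tfrac{1}{2}\le e^{-2}+\tfrac{1}{2}<1,
\]
so the denominator in~\eqref{bound of A} is uniformly bounded below. The same choice of $\tilde r$ also makes the correction factor $(1+\tfrac{C_1\|\tilde q\|_{H^p(\Omega)}}{\sqrt{2}\,\tilde r\eta_2})$ in~\eqref{bound of patial v} bounded by a universal constant.

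Combining these observations, $\|A^{-1}\|_2\lesssim e^{-\tilde r\eta_2^2}$ and $\max_j\|\tilde v_j\|_{H^1(\Omega)}\lesssim \tilde r\,e^{\tilde r R_0^2}$, where the implicit constants depend only on $\Omega$ and $d$. Multiplying by $\sqrt{n}$ yields~\eqref{Bound of v}. I do not expect any genuine obstacle here beyond bookkeeping: the heavy lifting (existence of the $\tilde v_j$ with the stated invertibility and quantitative bounds) is done in Lemma~\ref{invertible matrix thm}, and the main content of this proof is just the linear-algebraic inversion together with verifying that the calibrated value of $\tilde r$ makes the denominator in~\eqref{bound of A} harmless.
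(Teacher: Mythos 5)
Your proposal is correct and follows essentially the same route as the paper: apply $A^{-1}$ (up to a transpose, which is immaterial for the $2$-norm) to the auxiliary family from Lemma~\ref{invertible matrix thm}, verify the interpolation property, and combine $\|A^{-1}\|_2$ with $\max_j\|\tilde v_j\|_{H^1(\Omega)}$ via Cauchy--Schwarz, using the calibrated choice of $\tilde r$ to bound the denominator in~\eqref{bound of A} away from zero. Your explicit arithmetic check that the parenthetical quantity is at most $e^{-2}+\tfrac12<1$ is in fact slightly more careful than the paper's stated bound of $\tfrac12$, but the conclusion is the same up to the value of $C_3$.
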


\begin{proof}
Let \( \{\tilde{v}_j(x)\}_{1 \leq j \leq n} \) be the test functions constructed in Lemma~\ref{invertible matrix thm}, and define
\[
\begin{pmatrix}
v_1(x) \\
\vdots \\
v_n(x)
\end{pmatrix}
:= A^{-1}
\begin{pmatrix}
\tilde{v}_1(x) \\
\vdots \\
\tilde{v}_n(x)
\end{pmatrix}.
\]
Then the collection \( \{v_j(x)\}_{1 \leq j \leq n} \subset \mathcal{V} \) satisfies the interpolation condition~\eqref{delta condition}.

To estimate the norms, we use the bound
\begin{equation}\label{linear bound of v}
    \sum_{i=1}^n \|v_i\|_{H^1(\Omega)}^2 \leq \|A^{-1}\|_2^2 \sum_{j=1}^n \|\tilde{v}_j\|_{H^1(\Omega)}^2.
\end{equation}
From the definition of \( \tilde{r} \), we observe that
\[
n\left( \exp\left( -\frac{\tilde{r} \eta_1^2}{2} \right) + \frac{C_1 \|\tilde{q}\|_{H^p(\Omega)}}{\sqrt{2} \tilde{r} \eta_2} \right) \leq \frac{1}{2}.
\]
Substituting the estimates from~\eqref{bound of A} and~\eqref{bound of patial v} into~\eqref{linear bound of v}, and taking the maximum over \( i \), we obtain \eqref{Bound of v}.
\end{proof}

Using the interpolating basis, we may now reformulate the Kantorovich dual problem in terms of the functionals $\mathcal{R}_1$ and $\mathcal{R}_2$.

\begin{thm}\label{two v thm}
For all $(\varphi,\psi)\in\widetilde{\Phi}_c$, there exist test functions $v_{\varphi},v_{\psi}\in\mathcal{V}$ satisfying~\eqref{vvarphi and vpsi}. Moreover,
\[
\mathcal{T}_c(\mu,\nu)=\max_{(v_\varphi,v_\psi)}\bigl\{\mathcal{R}_1(v_\varphi)+\mathcal{R}_2(v_\psi)\bigr\}.
\]
\end{thm}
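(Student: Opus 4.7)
The plan is to prove the two assertions separately, using the interpolation basis of Theorem~\ref{basis thm} for the existence claim and then combining with Kantorovich duality for the variational identity.

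For the first part, I would apply Theorem~\ref{basis thm} twice: once to the source locations $N$ of $\mu$, producing a basis $\{v_s^{\mu}:s\in N\}\subset\mathcal{V}$ with $v_s^\mu(s')=\delta_{ss'}$, and once to $N'$ to obtain an analogous family $\{v_{s'}^\nu\}_{s'\in N'}$. Setting
\[
v_\varphi := \sum_{s\in N}\varphi(s)\,v_s^\mu,\qquad v_\psi := \sum_{s'\in N'}\psi(s')\,v_{s'}^\nu,
\]
linearity of $\mathcal{V}$ yields $v_\varphi,v_\psi\in\mathcal{V}$, and the interpolation property~\eqref{delta condition} immediately gives~\eqref{vvarphi and vpsi}.

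For the variational identity, I would chain three ingredients. First, Theorem~\ref{Rv duality thm} applied to the constructed $v_\varphi,v_\psi$ together with~\eqref{vvarphi and vpsi} produces
\[
\mathcal{R}_1(v_\varphi)+\mathcal{R}_2(v_\psi)=\sum_{s\in N}a_s\varphi(s)+\sum_{s'\in N'}b_{s'}\psi(s')=J(\varphi,\psi),
\]
so the correspondence $(\varphi,\psi)\mapsto(v_\varphi,v_\psi)$ is value-preserving. Second, since $c$ is bounded, the growth condition~\eqref{maximizer condition} is satisfied with $c_X=c_Y=\|c\|_\infty/2$, so Theorem~\ref{KV thm} yields $\mathcal{T}_c(\mu,\nu)=\max_{\Phi_c}J(\varphi,\psi)$. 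Third, Lemma~\ref{restrict lem} allows the maximization to be restricted to $\widetilde{\Phi}_c$. Composing these three equalities delivers the desired identity.

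The main subtlety is conceptual rather than technical: one must interpret the $\max$ on the right-hand side and verify that every admissible pair $(v_\varphi,v_\psi)$ arises from some $(\varphi,\psi)\in\widetilde{\Phi}_c$. This is clean because $\mu$ and $\nu$ are atomic, so both $J(\varphi,\psi)$ and $\mathcal{R}_1(v_\varphi)+\mathcal{R}_2(v_\psi)$ depend on $\varphi,\psi$ only through their values on the finite set $N\cup N'$. Consequently the dual problem collapses to a finite linear program on the compact box determined by $0\le\varphi(s)\le\|c\|_\infty$, $-\|c\|_\infty\le\psi(s')\le 0$ and $\varphi(s)+\psi(s')\le c(s,s')$; attainment is automatic and the correspondence between optimizers $(\varphi,\psi)$ and constructed pairs $(v_\varphi,v_\psi)$ is bijective on the relevant data.
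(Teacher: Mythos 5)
Your proposal is correct and follows essentially the same route the paper intends (the paper leaves this proof implicit): build $v_\varphi,v_\psi$ by linear combination of the interpolating basis from Theorem~\ref{basis thm}, use Theorem~\ref{Rv duality thm} to get $\mathcal{R}_1(v_\varphi)+\mathcal{R}_2(v_\psi)=J(\varphi,\psi)$, and conclude via Kantorovich duality together with Lemma~\ref{restrict lem}. Your closing observation that the value depends only on $\varphi|_N$ and $\psi|_{N'}$ is exactly the invariance point the paper records in the remark following the theorem.
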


\begin{remark}
Theorem~\ref{two v thm} shows that the values of the functionals $\mathcal{R}_1$ and $\mathcal{R}_2$ uniquely determine the optimal transport cost $\mathcal{T}_c(\mu,\nu)$. Although the interpolating functions $v_\varphi$ and $v_\psi$ may not be unique, the corresponding values $\mathcal{R}_1(v_\varphi)$ and $\mathcal{R}_2(v_\psi)$ are invariant.
\end{remark}
\subsection{Global OT stability for elliptic point sources}\label{Main for elliptic}

We now turn to the proof of the stability estimate~\eqref{stability}.  
Instead of using a pair of test functions \( (v_\varphi, v_\psi) \in \mathcal{V}\times\mathcal{V} \) as in Theorem~\ref{two v thm}, our strategy is to construct a single function \(v\in\mathcal{V}\) that encodes the discrepancy between \(u_1\) and \(u_2\) while retaining the structure of the Kantorovich dual pairing.  
This reduction is essential, since only a single function appears in the boundary integral representation~\eqref{Definition of Rv}.

To proceed, we decompose the sets of point sources according to how masses are redistributed between \(\mu\) and \(\nu\).  
Define
\[
S_1 := N\setminus N', \qquad 
S_2 := N'\setminus N,
\]
and partition the common support \(N\cap N'\) into
\[
S_3 := \{s\in N\cap N' : a_s>b_s\}, \qquad 
S_4 := (N\cap N')\setminus S_3.
\]
In particular, points in \(S_1\cup S_3\) represent locations where mass must be transported away, while those in \(S_2\cup S_4\) represent locations receiving transported mass.

\begin{tikzpicture}[scale=1.2, every node/.style={font=\small}]
\begin{scope}[shift={(1,0)}]
\draw[thick] (0,0) ellipse (2.2 and 1.8) node[above left=2pt] {$N$};
\draw[thick] (3.5,0) ellipse (2.2 and 1.8) node[above right=2pt] {$N'$};

\node at (-1,-0.5) {$S_1 = N \setminus N'$};
\node at (4.5,-0.5) {$S_2 = N' \setminus N$};


  \clip (0,0) ellipse (2.2 and 1.8);
  \fill[gray!20] (3.5,0) ellipse (2.2 and 1.8);

\draw[dashed] (1.5,0.75) -- (2.1,-0.7);

\node at (1.8,0.7) {$S_3$};
\node at (1.8,-0.7) {$S_4$};
\end{scope}
\begin{scope}[shift={(-0.5,-2.5)}]
  \draw[gray!20, fill=gray!20] (-0.5,0) rectangle (-1.0,0.3);
  \node[right=5pt] at (-0.6,0.15) {Common points $N \cap N'$};

  \draw[dashed] (3,0.15) -- +(0.5,0);
  \node[right=5pt] at (3.5,0.15) {Division: $S_3$ ($a_s > b_s$), $S_4$ ($a_s \leq b_s$)};
\end{scope}

\end{tikzpicture}

Clearly, the sets \( S_1, S_2, S_3, S_4 \) are pairwise disjoint, and we have the decompositions
\[
N = S_1 \cup S_3 \cup S_4, 
\qquad 
N' = S_2 \cup S_3 \cup S_4.
\]
Using the definitions of \( \mu \) and \( \nu \), the functional \( J(\varphi, \psi) \) can be written as
\begin{equation*}
    J(\varphi, \psi) 
    = \sum_{s \in S_1 \cup S_3 \cup S_4} a_s \varphi(s)
      + \sum_{s \in S_2 \cup S_3 \cup S_4} b_s \psi(s).
\end{equation*}

Let \( S := S_1 \cup S_2 \cup S_3 \cup S_4 \), and define
\begin{equation*}
    \eta_1 := \min_{\substack{s \neq s',\\ s, s' \in S}} |s - s'|,
    \qquad 
    \eta_2 := \min_{s \in S} |s|.
\end{equation*}

\begin{lem}\label{combine test functino lem}
    Assume that \( c(x,x) = 0 \) for all \( x \in \Omega \). 
    Then, for any \( (\varphi, \psi) \in \widetilde\Phi_c \), there exists a test function \( v \in \mathcal{V} \) such that
\[
\begin{cases}
v(s) = \varphi(s), & s \in S_1 \cup S_3, \\[0.2em]
v(s) = -\psi(s),   & s \in S_2 \cup S_4.
\end{cases}
\]
Moreover, the following inequality holds:
\begin{equation} \label{J and v}
    J(\varphi, \psi) \leq \mathcal{R}_1(v) - \mathcal{R}_2(v).
\end{equation}
\end{lem}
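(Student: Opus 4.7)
The plan is to invoke the interpolating basis of Theorem~\ref{basis thm} on the combined point set $S = S_1 \cup S_2 \cup S_3 \cup S_4$, whose elements are pairwise distinct and all lie in $\mathring{\Omega}$. Denoting the resulting basis by $\{v_s\}_{s\in S}\subset \mathcal{V}$ with $v_s(s')=\delta_{s,s'}$, I set
\[
v(x) := \sum_{s\in S_1\cup S_3} \varphi(s)\,v_s(x) \;-\; \sum_{s\in S_2\cup S_4} \psi(s)\,v_s(x),
\]
which lies in $\mathcal{V}$ and, by the interpolation property, satisfies $v(s)=\varphi(s)$ on $S_1\cup S_3$ and $v(s)=-\psi(s)$ on $S_2\cup S_4$, exactly the pointwise conditions demanded by the lemma.

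Next I would evaluate $\mathcal{R}_1(v)$ and $\mathcal{R}_2(v)$ through Theorem~\ref{Rv duality thm} using the decompositions $N = S_1\cup S_3\cup S_4$ and $N' = S_2\cup S_3\cup S_4$, substituting the prescribed nodal values. Straightforward bookkeeping yields
\[
\mathcal{R}_1(v)-\mathcal{R}_2(v)
= \sum_{s\in S_1}a_s\varphi(s)+\sum_{s\in S_2}b_s\psi(s)
+\sum_{s\in S_3}(a_s-b_s)\varphi(s)+\sum_{s\in S_4}(b_s-a_s)\psi(s).
\]
Subtracting the expression for $J(\varphi,\psi)$ recorded just above the lemma, the $S_1$ and $S_2$ contributions cancel term by term and the $S_3,S_4$ parts collapse into
\[
\mathcal{R}_1(v)-\mathcal{R}_2(v)-J(\varphi,\psi)
= -\sum_{s\in S_3}b_s\bigl(\varphi(s)+\psi(s)\bigr)
-\sum_{s\in S_4}a_s\bigl(\varphi(s)+\psi(s)\bigr).
\]

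The final step invokes the admissibility $(\varphi,\psi)\in\widetilde{\Phi}_c$, which forces $\varphi(x)+\psi(y)\le c(x,y)$ on $N\times N'$. For every $s\in N\cap N'=S_3\cup S_4$, the diagonal choice $x=y=s$ combined with the standing hypothesis $c(s,s)=0$ gives $\varphi(s)+\psi(s)\le 0$; since $a_s,b_s\ge 0$ by~\ref{H2}, both sums in the residual are nonnegative, and \eqref{J and v} follows. The main conceptual obstacle (rather than a technical one) lies in identifying the correct split of $S$ on which to assign $\varphi$ versus $-\psi$: the assignment ``$\varphi$ on $S_1\cup S_3$, $-\psi$ on $S_2\cup S_4$'' is engineered precisely so that, after cancellation, the leftover quantities reduce to diagonal evaluations $\varphi(s)+\psi(s)$ at the common atoms of $\mu$ and $\nu$; this is exactly what the Kantorovich constraint directly controls, and the hypothesis $c(x,x)=0$ becomes indispensable at this step.
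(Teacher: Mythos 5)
Your proposal is correct and follows essentially the same route as the paper: the same interpolating construction $v=\sum_{s\in S_1\cup S_3}\varphi(s)v_s-\sum_{s\in S_2\cup S_4}\psi(s)v_s$ from Theorem~\ref{basis thm}, with the inequality reduced to the diagonal constraint $\varphi(s)+\psi(s)\le c(s,s)=0$ at the common atoms. The only difference is cosmetic bookkeeping — you compute $\mathcal{R}_1(v)-\mathcal{R}_2(v)-J(\varphi,\psi)$ directly and identify the nonnegative residual $-\sum_{S_3}b_s(\varphi+\psi)-\sum_{S_4}a_s(\varphi+\psi)$, whereas the paper rewrites $J$ using $\min\{a_s,b_s\}$ and $|a_s-b_s|\,w(s)$; these are identical computations.
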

\begin{proof}

    The existence of \( v \) follows from the construction of the basis functions in Theorem~\ref{basis thm}. 
   More precisely, we first construct a collection \( \{v_s\}_{s \in S} \subset \mathcal{V} \) such that
    \[
    v_s(s') = 
    \begin{cases}
    1, & \text{if } s = s', \\[0.2em]
    0, & \text{if } s \neq s',
    \end{cases}
    \qquad s,s'\in S.
    \]
    Then the desired test function is given by
    \[
    v = \sum_{s \in S_1 \cup S_3} \varphi(s)\, v_s
        - \sum_{s \in S_2 \cup S_4} \psi(s)\, v_s.
    \]

    Since \( (\varphi, \psi) \in \widetilde\Phi_c \), we have 
    \(|\varphi(s)| \le \|c\|_\infty\) and \(|\psi(s)| \le \|c\|_\infty\) for all \(s\in S\). Hence
    \begin{equation}\label{estimate of v on boundary}
    \begin{aligned}
    \left\| \frac{\partial v}{\partial n} \right\|_{L^2(\partial \Omega)} 
    &\leq \left( \sum_{s \in S_1 \cup S_3} |\varphi(s)| 
            + \sum_{s \in S_2 \cup S_4} |\psi(s)| \right) \,\,
        \max_{s \in S} \left\| \frac{\partial v_s}{\partial n} \right\|_{L^2(\partial \Omega)} \\
    &\leq \|c\|_\infty |S| \,\, \max_{s \in S} \left\| \frac{\partial v_s}{\partial n} \right\|_{L^2(\partial \Omega)} \\
    &\leq C(\Omega)\,\|c\|_\infty |S| \,\, \max_{s \in S} \|v_s\|_{H^1(\Omega)} \\
    &\leq C_3(\Omega,d)\,\|c\|_\infty |S|\, \tilde{r}\, e^{\tilde{r}(R_0^2 - \eta_2^2)},
    \end{aligned}
    \end{equation}
    where now
    \[
    \tilde{r} := 2|S| \left( \frac{2}{\eta_1^2} 
        + \frac{1 + C_1 \|\tilde{q}\|_{H^p(\Omega)}}{\sqrt{2} \eta_2} \right),
    \]
    and \(C_1\) and \(C_3\) are as in Lemma~\ref{invertible matrix thm} and Theorem~\ref{basis thm}.

    Next, for any \( s \in S_3 \cup S_4 \), the admissibility condition
    \[
    \varphi(s) + \psi(s) \leq c(s, s) = 0
    \]
    implies \(\varphi(s) \le -\psi(s)\) on \(S_3\) and \(\psi(s) \le -\varphi(s)\) on \(S_4\). 
    Define
    \[
    w(s) := 
    \begin{cases}
    \varphi(s), & \text{if } s \in S_3,\\[0.2em]
    \psi(s),    & \text{if } s \in S_4.
    \end{cases}
    \]
    By the definition of \( v \), we then have
    \[
    a_s v(s) - b_s v(s) = |a_s - b_s|\, w(s), 
    \qquad \forall s \in S_3 \cup S_4.
    \]

    We now rewrite \(J(\varphi,\psi)\) using the above decomposition:
    \begin{align*}
    J(\varphi, \psi) 
    &= \sum_{s \in S_1 \cup S_3 \cup S_4} a_s \varphi(s)
    + \sum_{s \in S_2 \cup S_3 \cup S_4} b_s \psi(s) \\
    &= \sum_{s \in S_3 \cup S_4} 
        \Bigl[\min\{a_s, b_s\}\bigl(\varphi(s) + \psi(s)\bigr)
                + |a_s - b_s|\, w(s) \Bigr] \\
    &\quad + \sum_{s \in S_1} a_s \varphi(s)
            + \sum_{s \in S_2} b_s \psi(s).
    \end{align*}
    By the inequality \(\varphi(s)+\psi(s)\le 0\) on \(S_3\cup S_4\), we obtain
    \begin{align*}
    J(\varphi, \psi) 
    &\leq \sum_{s \in S_3 \cup S_4} |a_s - b_s|\, w(s)
        + \sum_{s \in S_1} a_s \varphi(s)
        + \sum_{s \in S_2} b_s \psi(s) \\
    &= \sum_{s \in N} a_s v(s) - \sum_{s' \in N'} b_{s'} v(s') \\
    &= \mathcal{R}_1(v) - \mathcal{R}_2(v),
    \end{align*}
    which proves~\eqref{J and v}.
\end{proof}

\begin{remark}
    The inequality~\eqref{J and v} may be strict even if \( (\varphi^*, \psi^*) \) is an optimal pair attaining the supremum of \( J(\varphi, \psi) \). 
    However, when \( c \) is a distance function on \( \Omega \), one can choose an optimal pair \( (\varphi^*, \psi^*) \in \widetilde{\Phi}_c \) such that
    \[
    \varphi^*(s) + \psi^*(s) = 0 
    \quad \text{for all } s \in S_3 \cup S_4,
    \]
    so that equality holds in~\eqref{J and v}. 
    This follows, for instance, from Theorem~1.14 in~\cite{villani2003topics}.
\end{remark}

We are now in a position to state the main stability theorem.
\begin{thm}
Let $\mu$ and $\nu$ be two probability measures in $\Theta_M$. 
Then there exists a test function $v^*\in\mathcal{V}$ such that
\begin{equation}\label{main result}
    \mathcal{T}_c(\mu, \nu) 
    \leq \bigl|\mathcal{R}_1(v^*)-\mathcal{R}_2(v^*)\bigr|
    \leq C_3(\Omega,d)\,\|\kappa\|_{C^0(\bar{\Omega})}\,\|c\|_{\infty}\,
        M\,\tilde{r}\, e^{\tilde{r}(R_0^2 - \eta_2^2)}\,
        \|u_1 - u_2\|_{L^2(\partial \Omega)},
\end{equation}
where
\[
\tilde{r} = 4M \left( \frac{2}{\eta_1^2} 
      + \frac{1 + C_1 \|\tilde{q}\|_{H^p(\Omega)}}{\sqrt{2} \eta_2} \right),
\]
and \(C_1\) is as in Lemma~\ref{invertible matrix thm}.
\end{thm}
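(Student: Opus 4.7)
The plan is to instantiate the Kantorovich duality chain developed in Theorem~\ref{two v thm} and Lemma~\ref{combine test functino lem}, and then convert the resulting expression $\mathcal{R}_1(v)-\mathcal{R}_2(v)$ into a boundary integral of $u_1-u_2$ tested against $\kappa\,\partial v/\partial n$. By Kantorovich duality (Theorem~\ref{KV thm}) together with Lemma~\ref{restrict lem}, which allows us to restrict the dual search space to $\widetilde\Phi_c$, and together with the boundedness of $c$ ensuring attainment, we may choose a maximizing pair $(\varphi^*,\psi^*)\in\widetilde\Phi_c$ with
\[
\mathcal{T}_c(\mu,\nu)=J(\varphi^*,\psi^*).
\]
Applying Lemma~\ref{combine test functino lem} to this pair produces a single test function $v^*\in\mathcal{V}$ satisfying $J(\varphi^*,\psi^*)\le \mathcal{R}_1(v^*)-\mathcal{R}_2(v^*)$, which yields the first inequality in~\eqref{main result}.

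Next I would make the boundary integral explicit: from the definition~\eqref{Definition of Rv},
\[
\mathcal{R}_1(v^*)-\mathcal{R}_2(v^*)=\int_{\partial\Omega}\kappa\,\frac{\partial v^*}{\partial n}\,(u_1-u_2)\,ds,
\]
so Cauchy--Schwarz and the bound $\kappa\le \|\kappa\|_{C^0(\bar\Omega)}$ give
\[
\bigl|\mathcal{R}_1(v^*)-\mathcal{R}_2(v^*)\bigr|
\le \|\kappa\|_{C^0(\bar\Omega)}\,\bigl\|\tfrac{\partial v^*}{\partial n}\bigr\|_{L^2(\partial\Omega)}\,\|u_1-u_2\|_{L^2(\partial\Omega)}.
\]
The remaining task is to control $\|\partial v^*/\partial n\|_{L^2(\partial\Omega)}$. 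This is exactly the content of estimate~\eqref{estimate of v on boundary} inside the proof of Lemma~\ref{combine test functino lem}, which bounds it by $C_3(\Omega,d)\,\|c\|_\infty\,|S|\,\tilde r\,e^{\tilde r(R_0^2-\eta_2^2)}$.

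Finally I would relate the cardinality and scale parameters appearing there to those in the statement of the theorem. Since $\mu,\nu\in\Theta_M$ carry at most $M$ atoms each, the combined support $S=S_1\cup S_2\cup S_3\cup S_4$ satisfies $|S|\le 2M$, so the formula for $\tilde r$ in Lemma~\ref{combine test functino lem}, with $|S|$ replaced by its upper bound $2M$, reproduces exactly the $\tilde r$ displayed in~\eqref{main result}; moreover $|S|\le 2M$ absorbs into the prefactor and, combined with the constant from~\eqref{estimate of v on boundary}, produces the $M$ appearing in the bound. Chaining these inequalities gives~\eqref{main result}.

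I expect the main technical point to be the bookkeeping in the last step, namely checking that the decomposition parameters $\eta_1,\eta_2$ and the cardinality estimate $|S|\le 2M$ combine consistently so that the $\tilde r$ used in Lemma~\ref{combine test functino lem} matches the one stated in the theorem, and that the extra factor $e^{-\tilde r\eta_2^2}$ carried by the norm estimate~\eqref{Bound of v} of the interpolating basis survives unchanged through the Cauchy--Schwarz step. The analytic content, namely the CGO-based construction and the dual reformulation, is already encapsulated in the preceding lemmas, so no additional PDE estimate is required here.
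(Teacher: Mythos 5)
Your proposal is correct and follows essentially the same route as the paper's proof: Kantorovich duality with the restricted dual class $\widetilde\Phi_c$, Lemma~\ref{combine test functino lem} to produce the single test function $v^*$, Cauchy--Schwarz on the boundary pairing from~\eqref{Definition of Rv}, the norm bound~\eqref{estimate of v on boundary}, and the bookkeeping $|S|\le 2M$ to pass from $2|S|$ to $4M$ in $\tilde r$ (which is legitimate since $R_0^2-\eta_2^2\ge 0$ makes the bound monotone in $\tilde r$). No gaps.
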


\begin{proof}
    By Kantorovich duality and Lemma~\ref{restrict lem}, we have
    \[
        \mathcal{T}_c(\mu, \nu) 
        = \max_{(\varphi, \psi) \in \widetilde{\Phi}_c} J(\varphi, \psi).
    \]
    Let \( (\varphi^*, \psi^*) \in \widetilde{\Phi}_c \) be a maximizer, and apply Lemma~\ref{combine test functino lem} to obtain a test function \( v^* \in \mathcal{V} \) such that
    \begin{equation}\label{J and v recall}
        J(\varphi^*, \psi^*) \leq \mathcal{R}_1(v^*) - \mathcal{R}_2(v^*).
    \end{equation}    
    On the other hand, from the definition of \( \mathcal{R}_i(v) \) in~\eqref{Definition of Rv}, we have, for every \(v\in\mathcal{V}\),
    \begin{equation}\label{bound of Rv}
        |\mathcal{R}_1(v) - \mathcal{R}_2(v)| 
        \leq \|\kappa\|_{C^0(\bar{\Omega})}
            \left\| \frac{\partial v}{\partial n} \right\|_{L^2(\partial \Omega)} 
            \|u_1 - u_2\|_{L^2(\partial \Omega)}.
    \end{equation}
    Moreover, for \(v=v^*\), the uniform bound on \( \left\| \frac{\partial v^*}{\partial n} \right\|_{L^2(\partial \Omega)} \) is provided by~\eqref{estimate of v on boundary}. 
    Combining~\eqref{J and v recall},~\eqref{bound of Rv}, and~\eqref{estimate of v on boundary}, we obtain
    \[
    J(\varphi^*, \psi^*) 
    \leq C_3(\Omega,d)\,\|\kappa\|_{C^0(\bar{\Omega})}\,\|c\|_{\infty}\,
            |S|\,\tilde{r}\, e^{\tilde{r}(R_0^2 - \eta_2^2)}\,
            \|u_1 - u_2\|_{L^2(\partial \Omega)}.
    \]
    Since \( \mu, \nu \in \Theta_M \), we have \( |S| \leq 2M \), and the choice of \(\tilde r\) with \(4M\) in its definition yields~\eqref{main result}. 
    Finally, using \(\mathcal{T}_c(\mu,\nu)=J(\varphi^*,\psi^*)\) completes the proof.
\end{proof}

\section{Case for parabolic equations}

We now consider the time-dependent setting and study the inverse point-source problem for a parabolic partial differential equation. Our goal is to derive a global stability result, in the sense of optimal transport, analogous to the elliptic estimate~\eqref{stability}.

\subsection{Problem setting}

We begin with the following forward model:
\begin{align}\label{directmodel 1}
\left\{
\begin{array}{ccll}
  u_t - \Delta u + q(x)\,u &=& \displaystyle\sum_{j=1}^{m} g_j(t)\,\delta(x-s_j)
  & \text{in } Q := \Omega\times(0,T), \\[0.4em]
  \dfrac{\partial u}{\partial n} &=& 0 
  & \text{on } \Sigma := \partial \Omega\times(0,T),\\[0.4em]
  u(x,0) &=& 0 
  & \text{in } \Omega.
\end{array}
\right.
\end{align}

\begin{remark}
A more general model with a spatially varying diffusion coefficient,
\[
    u_t - \nabla\cdot(\kappa(x)\nabla u) + q(x)\,u 
    = \sum_{j=1}^{m} g_j(t)\,\delta(x-s_j),
\]
may also be considered. Since the arguments below change only at the level of notation and not conceptually, we restrict to the case $\kappa(x)\equiv1$ for clarity of exposition.
\end{remark}

We impose the following assumptions on the space--time source.

\begin{Henum}[start=3]
    \item\label{H3} 
    The point source locations \(s_j\in\mathring{\Omega}\) are distinct, and \(m\le M\), where \(M\) is a fixed integer.

    \item\label{H4} 
    The source intensities satisfy \(g_j(t)\ge0\) and \(g_j\in L^2(0,T)\) for \(j=1,\dots,m\). 
    There exists \(T^*\in(0,T)\) such that
    \[
    g_j(t)=0 \quad \text{for } t\in[T^*,T],
    \]
    and the total intensity is normalized:
    \[
    \sum_{j=1}^m \int_{0}^{T^*} g_j(t)\,dt = 1.
    \]

    \item\label{H5}
    Each intensity function \(g_j(t)\) belongs to the finite-dimensional subspace
    \begin{equation}\label{def:GK}
        G_K := \Bigl\{
        g\in L^2(0,T^*) :
        \int_{0}^{T^*} g(t)\,e^{\frac{2k\pi \mathrm{i}}{T^*}t}\,dt = 0
        \ \text{for all } k\in\mathbb{Z},\,|k|>K
        \Bigr\}.
    \end{equation}
    Equivalently, the time dependence of each source is band-limited to Fourier frequencies \(|k|\le K\).
\end{Henum}

Under assumptions~\ref{H3}--\ref{H5}, the source term defines a probability measure on the space--time domain \(Q\). 
We denote the admissible class by
\[
\Theta_{M,K} := 
\Bigl\{\,
\mu = \sum_{j=1}^m g_j(t)\,\delta(x-s_j)
\ \text{such that } \mu \text{ satisfies assumptions \ref{H3}--\ref{H5}}
\Bigr\}.
\]

Let \(\mu,\nu\in\Theta_{M,K}\). 
We write the corresponding decompositions
\[
\mu = \sum_{s\in N} a_s(t)\,\delta(x-s),
\qquad
\nu = \sum_{s'\in N'} b_{s'}(t)\,\delta(x-s'),
\]
and denote by \(u_1,u_2\) the solutions of~\eqref{directmodel 1} associated with \(\mu\) and \(\nu\), respectively.

In the parabolic setting, the underlying optimal transport space is \(Q\). 
Let \(c:Q\times Q\to\mathbb{R}\) be a bounded cost function satisfying the assumptions stated in Section~\ref{sub: OT}. 
For two admissible sources \(\mu,\nu\in\Theta_{M,K}\), the corresponding optimal transport cost will be denoted by \(\mathcal{T}_c(\mu,\nu)\).

Our objective is to establish a global stability estimate relating the boundary discrepancy of the parabolic solutions to the transport geometry:
\begin{equation}\label{para stability}
    \mathcal{T}_c(\mu, \nu) 
    \le C\, \|u_1 - u_2\|_{L^2(\Sigma)},
\end{equation}
where \(C>0\) depends only on \(\Omega\), \(M\), \(K\), and the coefficients of the equation.

\subsection{Construction of time-dependent test functions}

We introduce the following notation associated with the intermediate time \(T^*\):
\begin{align*}
    &Q^{-} := \Omega \times (0, T^*), \quad \Sigma^{-} := \partial\Omega \times (0, T^*), \\
    &Q^{+} := \Omega \times (T^*, T), \quad \Sigma^{+} := \partial\Omega \times (T^*, T).
\end{align*}

For any \(v \in \mathcal{V}_t\), and for the two solutions \(u_1,u_2\) corresponding to \(\mu\) and \(\nu\), respectively, we define
\begin{equation}\label{parafunctionalR}
    \mathcal{R}_i(v) 
    := \int_\Omega u_i(x, T^*)\, v(x, T^*) \, dx
       + \int_{\Sigma^-} u_i\,\frac{\partial v}{\partial n} \, d\sigma,
    \qquad i=1,2.
\end{equation}

Multiplying~\eqref{directmodel 1} by a test function \(v \in \mathcal{V}_t\) and integrating by parts over \(Q^-\) yields the following representation of \(\mathcal{R}_i\).

\begin{thm}
    For all \(v\in \mathcal{V}_t\), we have
    \begin{align*}
        \mathcal{R}_1(v)
            &= \sum_{s\in N} \int_{0}^{T} a_s(t)\,v(s,t)\,dt
             = \int_{Q} v\,d\mu,\\
        \mathcal{R}_2(v)
            &= \sum_{s^{\prime}\in N^{\prime}}\int_{0}^{T} b_{s^{\prime}}(t)\,v(s^{\prime},t)\,dt
             = \int_{Q} v\,d\nu.
    \end{align*}
\end{thm}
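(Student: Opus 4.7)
The plan is to test the forward equation against the adjoint function $v$ and integrate by parts over the truncated cylinder $Q^-=\Omega\times(0,T^*)$ rather than $Q$. Working on $Q^-$ is natural because the definition of $\mathcal{R}_i$ involves the slice $\{t=T^*\}$ and the lateral boundary $\Sigma^-$, and because assumption \ref{H4} guarantees that each intensity $g_j$ is supported in $[0,T^*]$, so no source contribution is lost by restricting the time window.

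The argument I would write proceeds in the following steps. First, I take the equation satisfied by $u_1$, multiply by $v\in\mathcal{V}_t$, and integrate over $Q^-$:
\[
\int_{Q^-}(u_{1,t}-\Delta u_1+q u_1)\,v\,dx\,dt
=\sum_{j=1}^m\int_0^{T^*}g_j(t)\,v(s_j,t)\,dt.
\]
Second, I handle the time derivative by integration by parts in $t$, using the initial condition $u_1(\cdot,0)=0$, which yields
\[
\int_{Q^-}u_{1,t}v\,dx\,dt=\int_\Omega u_1(x,T^*)v(x,T^*)\,dx-\int_{Q^-}u_1 v_t\,dx\,dt.
\]
Third, I apply Green's second identity in the spatial variable to $-\Delta u_1\cdot v$ and use the homogeneous Neumann condition $\partial u_1/\partial n=0$ on $\Sigma^-$ to obtain
\[
\int_{Q^-}-\Delta u_1\cdot v\,dx\,dt=-\int_{Q^-}u_1\Delta v\,dx\,dt+\int_{\Sigma^-}u_1\,\tfrac{\partial v}{\partial n}\,d\sigma\,dt.
\]
Fourth, I collect the three interior contributions involving $u_1$ and observe that they combine into $-\int_{Q^-}u_1(v_t+\Delta v-qv)\,dx\,dt$, which vanishes identically by the defining property of $\mathcal{V}_t$. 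What remains on the left-hand side is exactly $\mathcal{R}_1(v)$ as defined in~\eqref{parafunctionalR}.

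For the right-hand side, I would write $g_j(t)=a_{s_j}(t)$ and invoke \ref{H4}: since $a_s(t)\equiv 0$ on $[T^*,T]$, the integral over $[0,T^*]$ coincides with the integral over $[0,T]$, giving
\[
\sum_{s\in N}\int_0^{T^*}a_s(t)v(s,t)\,dt=\sum_{s\in N}\int_0^T a_s(t)v(s,t)\,dt=\int_Q v\,d\mu.
\]
The identity for $\mathcal{R}_2$ follows by replacing $(u_1,\mu,N,a_s)$ by $(u_2,\nu,N',b_{s'})$.

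The only genuine subtlety is justifying the integration by parts when $u_i$ is merely a distributional solution driven by Dirac sources in space. I would resolve this by noting that $v\in\mathcal{V}_t$ solves a homogeneous parabolic equation with $q\in L^\infty$, hence is smooth in space away from $\partial\Omega$ by interior parabolic regularity; in particular the pointwise values $v(s_j,t)$ are well defined for $s_j\in\mathring{\Omega}$, and the duality pairing $\langle \sum_j g_j(t)\delta(x-s_j),v\rangle$ coincides with $\sum_j g_j(t)v(s_j,t)$. The computation above is then the standard transposition/very-weak formulation of~\eqref{directmodel 1}, which can be taken as the definition of $u_i$; alternatively one approximates the Dirac sources by smooth mollifications, performs the classical integration by parts, and passes to the limit using $g_j\in L^2(0,T)$ and $v\in L^2(0,T;H^1(\Omega))$ together with the trace $v(\cdot,T^*)\in L^2(\Omega)$ inherited from the parabolic regularity of elements of $\mathcal{V}_t$.
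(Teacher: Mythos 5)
Your proof is correct and follows exactly the route the paper indicates (the paper merely states that the identity follows from multiplying~\eqref{directmodel 1} by $v$ and integrating by parts over $Q^-$, without writing out the computation): the time and space integrations by parts, the use of $u_i(\cdot,0)=0$ and the Neumann condition, the cancellation via $v_t+\Delta v-qv=0$, and the extension of the time integral from $[0,T^*]$ to $[0,T]$ via~\ref{H4} are all as intended. Your additional remarks on justifying the pairing with the Dirac sources via interior parabolic regularity and the transposition formulation are a welcome refinement that the paper leaves implicit.
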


The identities above show that, as in the elliptic case, the functionals \(\mathcal{R}_1\) and \(\mathcal{R}_2\) depend only on the values of \(v\) at the spatial source locations, weighted by the corresponding time-dependent intensities.
We now construct time-dependent test functions by combining the spatial basis from Theorem~\ref{basis thm} with a truncated temporal Fourier expansion.

\begin{thm}\label{para basis thm}
Let \(h_1,\ldots,h_n \in G_K\) and let \(s_1,\ldots,s_n \in \Omega\) be distinct points. Then there exists a test function \(v(x,t)\in\mathcal{V}_t\) such that
\[
v(s_j, t) = h_j(t), \quad \text{for all } 1 \leq j \leq n \text{ and } t \in (0, T^*).
\]
Moreover, there exists a constant
\[
C_4 = C_4(\eta_1,\eta_2,n,\Omega,q,K) > 0,
\]
where 
\[
\eta_1 := \min_{i \neq j} |s_i - s_j|, 
\qquad 
\eta_2 := \min_{1 \le j\le n} |s_j|,
\]
such that
\begin{equation}\label{para Bound of v}
    \left\| \frac{\partial v}{\partial n} \right\|_{L^2(\Sigma^-)} 
    \leq C_4 \,\max_{1 \leq j \leq n} \| h_j \|_{L^2(0, T^*)}.
\end{equation}
\end{thm}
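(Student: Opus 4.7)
The plan is to reduce the interpolation problem to a finite family of stationary problems via Fourier expansion in time. Since $h_j\in G_K$, it is a finite trigonometric polynomial
\[
h_j(t)=\sum_{|k|\le K} c_{j,k}\,e^{i\omega_k t},\qquad \omega_k:=\tfrac{2\pi k}{T^*},
\]
and Parseval gives $\sum_{|k|\le K}|c_{j,k}|^2=(T^*)^{-1}\|h_j\|_{L^2(0,T^*)}^2$. The separated ansatz $v(x,t)=w(x)e^{i\omega_k t}$ lies in $\mathcal{V}_t$ if and only if $w$ satisfies the stationary Schr\"odinger-type equation $-\Delta w+(q-i\omega_k)w=0$ in $\Omega$. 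So for each $k$ with $|k|\le K$ I would invoke Lemma~\ref{invertible matrix thm} and Theorem~\ref{basis thm} with $\kappa\equiv 1$ and the effective (complex) potential $q_k:=q-i\omega_k$ in place of $q$, producing an interpolating basis $\{w_j^{(k)}\}_{j=1}^n$ with $w_j^{(k)}(s_\ell)=\delta_{j\ell}$ and an $H^1$-bound of the type \eqref{Bound of v}.

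With these stationary bases in hand, I would set
\[
v(x,t):=\sum_{|k|\le K}\sum_{j=1}^n c_{j,k}\,w_j^{(k)}(x)\,e^{i\omega_k t},
\]
a finite sum of exact adjoint solutions, hence $v\in\mathcal{V}_t$; the property $w_j^{(k)}(s_\ell)=\delta_{j\ell}$ immediately gives $v(s_\ell,t)=h_\ell(t)$ for all $t\in(0,T^*)$. For the boundary estimate I would exploit the $L^2(0,T^*)$-orthogonality $\int_0^{T^*}e^{i(\omega_k-\omega_\ell)t}\,dt=T^*\delta_{k\ell}$ to get
\[
\left\|\frac{\partial v}{\partial n}\right\|_{L^2(\Sigma^-)}^2
= T^*\sum_{|k|\le K}\left\|\sum_{j=1}^n c_{j,k}\,\frac{\partial w_j^{(k)}}{\partial n}\right\|_{L^2(\partial\Omega)}^2,
\]
then apply Cauchy--Schwarz in $j$, the trace inequality, and the $H^1$-bound \eqref{Bound of v} for each $w_j^{(k)}$ to extract a constant $\tilde C=\tilde C(\eta_1,\eta_2,n,\Omega,q,K)$ with $\max_{j,|k|\le K}\|\partial_n w_j^{(k)}\|_{L^2(\partial\Omega)}\le\tilde C$. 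Parseval then converts $\sum_{|k|\le K}|c_{j,k}|^2$ back into $\|h_j\|_{L^2(0,T^*)}^2/T^*$, producing \eqref{para Bound of v} with a constant $C_4$ of the advertised form.

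The main obstacle is checking that Lemma~\ref{invertible matrix thm} extends to the \emph{complex} potential $q-i\omega_k$ with constants uniform in $|k|\le K$. The CGO construction sketched in the appendix is perturbative and depends on the potential only through its $H^p$-norm, so the extension should be essentially formal once one notes that
\[
\|q-i\omega_k\|_{H^p(\Omega)}\le \|q\|_{H^p(\Omega)}+C(\Omega)\cdot\tfrac{2\pi K}{T^*}
\]
is uniform in $|k|\le K$; still, one must verify that the correction-term estimates for the CGO ansatz remain valid when a bounded purely imaginary zero-order term is added to the Schr\"odinger operator, and that the resulting $\tilde r$ depends on $k$ only through the uniform bound above. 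Apart from this technical but likely routine check, the proof is a direct assembly of the pieces already established in the elliptic case.
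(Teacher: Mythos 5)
Your proposal is correct and follows essentially the same route as the paper: Fourier expansion in time, the separated ansatz reducing to the stationary operator with shifted potential $q_k=q-\tfrac{2\pi i k}{T^*}$, Theorem~\ref{basis thm} applied mode by mode with the uniform bound $\|q_k\|_{H^p(\Omega)}\le\|q\|_{H^p(\Omega)}+\tfrac{2\pi K}{T^*}\sqrt{|\Omega|}$, and Parseval plus trace estimates for \eqref{para Bound of v}. The one point you flag as needing verification---validity of the CGO construction for the complex potential---is already covered, since Lemma~\ref{lemma: test function construction} in the appendix is stated for complex-valued $\tilde q$ and depends on the potential only through its $H^p$-norm.
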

\begin{proof}
    Let \(e_k(t) := e^{\frac{2\pi i k t}{T^*}}\) for \(k\in\mathbb{Z}\) denote the Fourier modes on \((0,T^*)\).
    Since \(h_j \in G_K\), each \(h_j\) admits a truncated Fourier expansion
    \[
        h_j(t) = \sum_{|k|\le K} c_{k,j}\, e_k(t).
    \]

    For each Fourier mode \(k\) and each index \(j\), Theorem~\ref{basis thm} can be applied to the modified potential
    \[
        q_k(x) := q(x) - \frac{2\pi i k}{T^*}
    \]
    to construct spatial test functions \(v_{k,j}(x)\in H^1(\Omega)\) satisfying
    \begin{equation*}
        v_{k,j}(s_\ell) = \delta_{\ell,j},
        \qquad
        (\Delta - q_k)v_{k,j}(x) = 0.
    \end{equation*}
    Moreover, from~\eqref{Bound of v} we have
    \[
        \max_{1\le j\le n} \|v_{k,j}\|_{H^1(\Omega)}
        \le C_3\,\sqrt{n}\,\tilde r_k\, e^{\tilde r_k(R_0^2 - \eta_2^2)},
    \]
    where
    \[
    \tilde r_k 
        := 2n \left( \frac{2}{\eta_1^2} 
            + \frac{1 + C_1 \|q_k\|_{H^p(\Omega)}}{\sqrt{2} \eta_2} \right),
    \]
    and \(C_1\) and \(C_3\) are the constants from Lemma~\ref{invertible matrix thm} and Theorem~\ref{basis thm}, respectively.

    For \(|k|\le K\), we further observe that
    \[
        \|q - \tfrac{2\pi i k}{T^*}\|_{H^p(\Omega)}
        \le \|q\|_{H^p(\Omega)} + \frac{2\pi K}{T^*}\sqrt{|\Omega|},
    \]
    so that
    \[
        \tilde r_k \le r_K 
        := 2n\left(
            \frac{2}{\eta_1^2}
            + \frac{1 + C_1\bigl(\|q\|_{H^p(\Omega)} + \frac{2\pi K}{T^*}\sqrt{|\Omega|}\bigr)}{\sqrt{2}\,\eta_2}
        \right).
    \]
    Consequently,
    \begin{equation}\label{uniform vk bound}
    \max_{\substack{1 \leq j \leq n \\ |k| \leq K}} 
        \|v_{k,j}\|_{H^1(\Omega)} 
        \leq C_3 \sqrt{n}\, r_K\, e^{r_K(R_0^2 - \eta_2^2)}.
    \end{equation}
    
    We now define the time-dependent test function
    \begin{equation}\label{sum of v}
        v(x,t) := \sum_{j=1}^n \sum_{|k| \leq K} c_{k,j}\, e_k(t)\, v_{k,j}(x).
    \end{equation}
    By construction and the choice of \(q_k\), we have
    \[
    \partial_t v + \Delta v - q(x)v = 0 \quad \text{in } Q^{-},
    \]
    so \(v\in\mathcal{V}_t\). Moreover, at the interpolation points,
    \[
    v(s_j,t) 
        = \sum_{|k|\le K} c_{k,j} e_k(t)
        = h_j(t), \qquad 1\le j\le n,\ t\in(0,T^*).
    \]

    To estimate the spatial Sobolev norm of \(v\), we use Cauchy-Schwarz and the orthogonality of the Fourier modes:
    \begin{align*}
    \|v(\cdot,t)\|_{H^1(\Omega)} 
    &\le \left( \sum_{j=1}^{n} \sum_{|k| \leq K} |c_{k,j}|^2 \right)^{1/2}
        \left( \sum_{j=1}^{n} \sum_{|k| \leq K} \|v_{k,j}\|^2_{H^1(\Omega)} \right)^{1/2} \\
    &= \left( \frac{1}{T^*} \sum_{j=1}^n \|h_j\|^2_{L^2(0,T^*)} \right)^{1/2}
        \left( \sum_{j=1}^n \sum_{|k| \leq K} \|v_{k,j}\|^2_{H^1(\Omega)} \right)^{1/2} \\
    &\leq \sqrt{\frac{n^2 (2K+1)}{T^*}}\,
        \max_{1\le j\le n}\|h_j\|_{L^2(0,T^*)}\,
        \max_{\substack{1 \leq j \leq n \\ |k| \leq K}} \|v_{k,j}\|_{H^1(\Omega)}.
    \end{align*}
    Combining this with~\eqref{uniform vk bound} and using standard trace estimates on \(\partial\Omega\) yields
    \[
    \left\| \frac{\partial v}{\partial n} \right\|_{L^2(\Sigma^-)}
        \le C_4(\eta_1,\eta_2,n,\Omega,q,K)\,
            \max_{1\le j\le n}\|h_j\|_{L^2(0,T^*)},
    \]
    which is precisely~\eqref{para Bound of v}.
\end{proof}

\begin{remark}
    The band-limited assumption is essential in the above construction.
    When \(h_j\) are arbitrary functions in \(L^2(0,T^*)\), the above construction does not apply directly. In that case, the series~\eqref{sum of v} need not converge in \(L^2(0,T^*;H^1(\Omega))\): the constant \(C_4\) in~\eqref{para Bound of v} grows exponentially as \(K\to\infty\), and the resulting bound becomes ineffective.  
    If one were able to construct \(v\) for general \(h\in L^2(0,T^*)\) with uniform control on the boundary norm, then the band-limiting assumption \(g_j\in G_K\) could be removed.
\end{remark}

Now we relate the optimal transport cost \(\mathcal{T}_c(\mu,\nu)\) to the boundary functionals \(\mathcal{R}_1\) and \(\mathcal{R}_2\).

\begin{thm}\label{para two v thm}
Let \( (\varphi, \psi) \in \widetilde{\Phi}_c \). Then, using the basis functions constructed in Theorem~\ref{para basis thm}, one can construct test functions \( v_\varphi, v_\psi \in \mathcal{V}_t \) such that
\begin{equation}\label{para vvarphi and vpsi}
    J(\varphi, \psi) = \mathcal{R}_1(v_\varphi) + \mathcal{R}_2(v_\psi).
\end{equation}
Moreover, the optimal transport cost satisfies
\begin{equation}\label{para v and cost}
    \mathcal{T}_c(\mu, \nu) = \max_{(v_\varphi, v_\psi)} 
        \left\{ \mathcal{R}_1(v_\varphi) + \mathcal{R}_2(v_\psi) \right\},
\end{equation}
where the maximum is taken over all pairs \( (v_\varphi, v_\psi) \in \mathcal{V}_t \times \mathcal{V}_t \) corresponding to admissible dual potentials \( (\varphi, \psi) \in \widetilde{\Phi}_c \) through~\eqref{para vvarphi and vpsi}.
\end{thm}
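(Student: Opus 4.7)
The plan mirrors the elliptic Theorem~\ref{two v thm}, adapted to the space-time atomic structure of $\mu$ and $\nu$ and to the band-limited constraint~\ref{H5}. The key observation is that because each $a_s$, $b_{s'}\in G_K$ and vanishes on $[T^*,T]$, for every admissible pair one has
\[
J(\varphi,\psi)
= \sum_{s\in N}\int_{0}^{T^*}a_s(t)\,\varphi(s,t)\,dt
 +\sum_{s'\in N'}\int_{0}^{T^*}b_{s'}(t)\,\psi(s',t)\,dt,
\]
and by $L^2(0,T^*)$-orthogonality these integrals only see the projections of $\varphi(s,\cdot)$ and $\psi(s',\cdot)$ onto the finite-dimensional subspace $G_K$. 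This is precisely the class of targets admissible in Theorem~\ref{para basis thm}.

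First I would invoke Lemma~\ref{restrict lem} to restrict attention to $(\varphi,\psi)\in\widetilde\Phi_c$, so that the traces $\varphi(s,\cdot)$ and $\psi(s',\cdot)$ are bounded and in particular belong to $L^2(0,T^*)$. Letting $P_K$ denote the Fourier truncation onto $G_K$, I set
\[
h^\varphi_s := P_K\,\varphi(s,\cdot)\in G_K\quad (s\in N),\qquad
h^\psi_{s'} := P_K\,\psi(s',\cdot)\in G_K\quad (s'\in N').
\]
Two applications of Theorem~\ref{para basis thm}, one to the point set $N$ with targets $\{h^\varphi_s\}$ and one to $N'$ with targets $\{h^\psi_{s'}\}$, then produce $v_\varphi,v_\psi\in\mathcal{V}_t$ satisfying $v_\varphi(s,t)=h^\varphi_s(t)$ and $v_\psi(s',t)=h^\psi_{s'}(t)$ for all $s\in N$, $s'\in N'$, $t\in(0,T^*)$.

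The identity~\eqref{para vvarphi and vpsi} then follows from the representation $\mathcal{R}_1(v)=\int_Q v\,d\mu$ proved just above the theorem: since $a_s\in G_K$ is orthogonal to every mode outside $G_K$,
\[
\mathcal{R}_1(v_\varphi)
=\sum_{s\in N}\int_0^{T^*}a_s(t)\,h^\varphi_s(t)\,dt
=\sum_{s\in N}\int_0^{T^*}a_s(t)\,\varphi(s,t)\,dt
=\int_Q\varphi\,d\mu,
\]
and the same computation yields $\mathcal{R}_2(v_\psi)=\int_Q\psi\,d\nu$; adding gives $J(\varphi,\psi)=\mathcal{R}_1(v_\varphi)+\mathcal{R}_2(v_\psi)$. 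For~\eqref{para v and cost}, combining Theorem~\ref{KV thm} with Lemma~\ref{restrict lem} gives $\mathcal{T}_c(\mu,\nu)=\max_{\widetilde\Phi_c}J(\varphi,\psi)$, and since $(\varphi,\psi)\mapsto(v_\varphi,v_\psi)$ preserves the functional value while producing an admissible pair of test functions, the two maxima coincide.

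The main obstacle is conceptual rather than computational: one must recognize that the atomic space structure of $\mu,\nu$ together with hypothesis~\ref{H5} forces the duality pairing to depend only on finitely many Fourier modes of the dual potentials, so that Theorem~\ref{para basis thm} can interpolate exactly the required traces while retaining the uniform bound on $\|\partial_n v\|_{L^2(\Sigma^-)}$ from~\eqref{para Bound of v}. Dropping~\ref{H5} would force interpolation of arbitrary $L^2(0,T^*)$ traces, for which the constant $C_4$ in~\eqref{para Bound of v} degrades exponentially in the effective bandwidth; hence the argument relies crucially on the band-limiting built into $\Theta_{M,K}$.
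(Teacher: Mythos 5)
Your proposal is correct and follows essentially the same route as the paper: project the dual potentials' time traces onto $G_K$, use the $L^2(0,T^*)$-orthogonality against $a_s,b_{s'}\in G_K$ to show the pairing is unchanged, interpolate the projected traces via Theorem~\ref{para basis thm}, and conclude via Kantorovich duality with Lemma~\ref{restrict lem}. No gaps.
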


\begin{proof}
    By the definitions of \(\mu\) and \(\nu\), we may write
    \begin{equation}\label{definition of J  para}
        J(\varphi, \psi)
        = \sum_{s \in N} \int_0^{T^*} \varphi(s,t)\, a_s(t)\,dt
        + \sum_{s' \in N'} \int_0^{T^*} \psi(s',t)\, b_{s'}(t)\,dt.
    \end{equation}

    Let \(\mathcal{P}: L^2(0, T^*) \to G_K\) denote the orthogonal projection onto \(G_K\).  
    Then, for any \(g\in L^2(0,T^*)\) and any \(g'\in G_K\),
    \[
    \langle g - \mathcal{P}g,\, g' \rangle_{L^2(0,T^*)} = 0,
    \qquad
    \|\mathcal{P}g\|_{L^2(0,T^*)} \le \|g\|_{L^2(0,T^*)}.
    \]
    Since \(a_s, b_{s'}\in G_K\), we can replace \(\varphi(s,\cdot)\) and \(\psi(s',\cdot)\) by their projections without changing the pairing:
    \begin{equation}\label{projection identity}
        \sum_{s \in N} \int_0^{T^*} \varphi(s,t)\, a_s(t)\,dt
        = \sum_{s\in N} \int_0^{T^*} \mathcal{P}\bigl(\varphi(s,\cdot)\bigr)(t)\, a_s(t)\,dt,
    \end{equation}
    and similarly for the terms involving \(\psi\) and \(\nu\).

    Applying Theorem~\ref{para basis thm}, we can construct the corresponding test functions \( v_\varphi \) and \( v_\psi \) such that
    \begin{equation*}
        v_\varphi(s, t) = \mathcal{P}(\varphi(s, t)), \quad 
        v_\psi(s', t) = \mathcal{P}(\psi(s', t)) 
        \quad \text{for all } s \in N,\; s' \in N'.
    \end{equation*}

    Substituting these identities into~\eqref{definition of J  para} and using~\eqref{projection identity}, together with the representation of \(\mathcal{R}_i\) in terms of the sources, we obtain \eqref{para vvarphi and vpsi}. The characterization~\eqref{para v and cost} then follows directly from Kantorovich duality combined with Lemma~\ref{restrict lem}.
\end{proof}

\begin{remark}
    As in Theorem~\ref{two v thm} for the elliptic case, Theorem~\ref{para two v thm} shows that the values of the functionals \( \mathcal{R}_1 \) and \( \mathcal{R}_2 \) uniquely determine the optimal transport cost \( \mathcal{T}_c(\mu, \nu) \) within the admissible class \(\Theta_{M,K}\).  
    Although the test functions \(v_\varphi\) and \(v_\psi\) realizing~\eqref{para vvarphi and vpsi} are not unique, the corresponding values \(\mathcal{R}_1(v_\varphi)\) and \(\mathcal{R}_2(v_\psi)\) are invariant.
\end{remark}

\subsection{Global OT stability for parabolic point sources}

We now establish the global stability estimate \eqref{para stability} for parabolic point sources.
Let $\mu,\nu\in\Theta_{M,K}$ with spatial supports
$N$ and $N'$ and write
\[
S_1 := N \setminus N', \quad 
S_2 := N' \setminus N, \quad 
S_3 := N \cap N',
\]
and
\[
S := S_1 \cup S_2 \cup S_3.
\]
We also introduce the separation parameters
\[
\eta_1 := \min_{\substack{s \neq s' \\ s, s' \in S}} |s - s'|, \qquad 
\eta_2 := \min_{s \in S} |s|.
\]

\begin{lem}\label{para combine test functino lem}
Assume that the cost function satisfies \(c((x,t),(x,t))=0\) for all \((x,t)\in Q^{-}\).
For any \( (\varphi,\psi)\in\widetilde{\Phi}_c \), define
\[
\tilde{v}(s,t):=
\begin{cases}
\varphi(s,t), & s\in S_1 \ \text{or}\ \bigl(s\in S_3,\ a_s(t)\ge b_s(t)\bigr), \\[0.3em]
-\psi(s,t), & s\in S_2 \ \text{or}\ \bigl(s\in S_3,\ a_s(t)< b_s(t)\bigr),
\end{cases}
\qquad s\in S,\ t\in(0,T^*).
\]
Then there exists a test function \(v\in\mathcal{V}_t\) such that
\[
v(s,\cdot)=\mathcal{P}\bigl(\tilde{v}(s,\cdot)\bigr)\quad\text{for all }s\in S,
\]
where \(\mathcal{P}:L^2(0,T^*)\to G_K\) denotes the orthogonal projection onto \(G_K\).
Moreover,
\begin{equation}\label{para J and v}
    J(\varphi,\psi)\le \mathcal{R}_1(v)-\mathcal{R}_2(v).
\end{equation}
\end{lem}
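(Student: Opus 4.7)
The plan is to mirror the elliptic argument in Lemma~\ref{combine test functino lem}, but with two additional ingredients adapted to the parabolic setting: the test function is now time-dependent, and its trace on the source locations must be replaced by a Fourier-truncated projection onto $G_K$. I will first construct $v$, then convert the dual integrand into the interior pairing against $\tilde v$ via a pointwise-in-$t$ case analysis on $S_3$, and finally remove the projection using the orthogonality provided by assumption~\ref{H5}.

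For the construction, I would apply Theorem~\ref{para basis thm} to the finite set $S=\{s_1,\dots,s_n\}$ with interpolation data $h_j(t) := \mathcal{P}\bigl(\tilde v(s_j,\cdot)\bigr)(t)\in G_K$. Since each $h_j$ lies in $G_K$, the theorem yields $v\in\mathcal{V}_t$ with $v(s_j,t)=h_j(t)$ for all $t\in(0,T^*)$, which is exactly the claimed property $v(s,\cdot)=\mathcal{P}(\tilde v(s,\cdot))$ on $S$. Note that $\tilde v(s,\cdot)\in L^2(0,T^*)$ because $(\varphi,\psi)\in\widetilde{\Phi}_c$ is uniformly bounded, so the projection is well-defined even though $\tilde v(s,\cdot)$ may be discontinuous in $t$ on $S_3$ due to the pointwise sign switch between $a_s(t)$ and $b_s(t)$.

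For the inequality, I would split the terms of $J(\varphi,\psi)$ according to $S_1$, $S_2$, and $S_3$. On $S_1$ the integrand is already $a_s(t)\tilde v(s,t)$, and on $S_2$ it is $-b_{s'}(t)\tilde v(s',t)$ by the definition of $\tilde v$. On $S_3$, the admissibility $\varphi(s,t)+\psi(s,t)\le c((s,t),(s,t))=0$ combined with a routine case split (whether $a_s(t)\ge b_s(t)$ or $a_s(t)<b_s(t)$) yields in both cases the pointwise bound
\[
a_s(t)\varphi(s,t)+b_s(t)\psi(s,t)\le \bigl(a_s(t)-b_s(t)\bigr)\tilde v(s,t),
\]
where the nonnegativity of $\min\{a_s(t),b_s(t)\}$ makes the excess $(\varphi+\psi)$-term nonpositive. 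Integrating in $t$ over $(0,T^*)$ and summing then gives
\[
J(\varphi,\psi)\le \sum_{s\in N}\int_0^{T^*} a_s(t)\tilde v(s,t)\,dt - \sum_{s'\in N'}\int_0^{T^*} b_{s'}(t)\tilde v(s',t)\,dt.
\]

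The final step is to replace $\tilde v$ by $v$ inside the pairings. Because $a_s,b_{s'}\in G_K$ and $v(s,\cdot)=\mathcal{P}(\tilde v(s,\cdot))$, the orthogonality of $\mathcal{P}$ on $L^2(0,T^*)$ yields
\[
\int_0^{T^*}\tilde v(s,t)\,a_s(t)\,dt=\int_0^{T^*}v(s,t)\,a_s(t)\,dt,
\]
and similarly for $b_{s'}$. Using that $a_s,b_{s'}$ vanish on $[T^*,T]$ by~\ref{H4}, the resulting expressions match precisely $\mathcal{R}_1(v)$ and $\mathcal{R}_2(v)$ from~\eqref{parafunctionalR}, giving~\eqref{para J and v}. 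The main point of care, rather than a true obstacle, is keeping track of the pointwise-in-$t$ sign analysis on $S_3$ before the projection is applied; once the decomposition is written against $\tilde v$, the projection step is an immediate consequence of $a_s,b_{s'}\in G_K$, which is precisely the role played by the band-limiting hypothesis~\ref{H5}.
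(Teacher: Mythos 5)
Your proposal is correct and follows essentially the same route as the paper's proof: construct $v$ by applying Theorem~\ref{para basis thm} to the projected data $\mathcal{P}(\tilde v(s,\cdot))\in G_K$, bound $J$ via the pointwise-in-$t$ case analysis on $S_3$ using $\varphi(s,t)+\psi(s,t)\le c((s,t),(s,t))=0$ together with $a_s,b_s\ge 0$, and then replace $\tilde v$ by $v$ using orthogonality of $\mathcal{P}$ against $a_s,b_{s'}\in G_K$. The only item the paper additionally records inside this proof is the uniform bound $\|v(\cdot,t)\|_{H^1(\Omega)}\le C_4\sqrt{T^*}\|c\|_\infty$ (needed later in the main theorem), but that is not part of the lemma's statement, so your argument is complete as a proof of the lemma.
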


\begin{proof}

    Since \( (\varphi, \psi) \in \widetilde{\Phi}_c \), both \(\varphi\) and \(\psi\) are bounded on \(Q\). In particular,
    \begin{equation*}
        \|\varphi\|_{L^{\infty}(Q)} \leq \|c\|_{\infty}, 
        \qquad 
        \|\psi\|_{L^{\infty}(Q)} \leq \|c\|_{\infty}.
    \end{equation*}
    Hence, by the definition of \(\tilde{v}\),
    \begin{equation*}
        \|\tilde{v}(s, \cdot)\|_{L^2(0, T^*)} 
        \leq \sqrt{T^*}\,\|c\|_{\infty}
        \quad \text{for all } s \in S.
    \end{equation*}
    Since \(\mathcal{P}\) is the orthogonal projection onto \(G_K\), we also have
    \begin{equation*}
        \|\mathcal{P}(\tilde{v}(s, \cdot))\|_{L^2(0, T^*)} 
        \leq \|\tilde{v}(s, \cdot)\|_{L^2(0, T^*)} 
        \leq \sqrt{T^*}\,\|c\|_{\infty},
        \quad s \in S.
    \end{equation*}

    Applying Theorem~\ref{para basis thm}, we can construct a test function \( v \in \mathcal{V}_t \) such that
    \begin{equation}\label{eq:v-interpolation-S}
        v(s,t) = h_s(t) = \mathcal{P}\big(\tilde{v}(s,\cdot)\big)(t), 
        \quad \text{for all } s \in S,\ t\in(0,T^*).
    \end{equation}

    Moreover, from~\eqref{para Bound of v} and the bound on \(\mathcal{P}(\tilde{v})\), we obtain
    \begin{equation}\label{bound of v H1 para}
        \|v(\cdot,t)\|_{H^{1}(\Omega)}
        \leq C_4 \,\sqrt{T^*}\,\|c\|_{\infty},
        \quad \text{for all } t \in (0,T^*).
    \end{equation}

    Next, for any \(s \in S_3\), the admissibility condition \((\varphi,\psi)\in\widetilde{\Phi}_c\) implies
    \[
    \varphi(s, t) + \psi(s, t) \leq c\big((s, t), (s, t)\big) = 0.
    \]
    Define
    \[
    w(s, t) :=
    \begin{cases}
    \varphi(s, t), & \text{if } a_s(t) \geq b_s(t), \\[0.3em]
    \psi(s, t), & \text{if } a_s(t) < b_s(t),
    \end{cases}
    \qquad s \in S_3.
    \]
    By the definition of \(\tilde{v}\), we then have, for all \(s\in S_3\),
    \[
    a_s(t) \tilde{v}(s,t) - b_s(t) \tilde{v}(s,t) 
    = |a_s(t) - b_s(t)|\, w(s, t),
    \quad t\in(0,T^*).
    \]

    We now estimate \(J(\varphi,\psi)\). Using the definition of \(J\) and splitting the sums over \(S_1,S_2,S_3\), we obtain
    \begin{align*}
    J(\varphi, \psi) 
    &= \sum_{s \in S_1 \cup S_3} \int_0^{T^*} a_s(t) \varphi(s, t) \, dt 
    + \sum_{s \in S_2 \cup S_3} \int_0^{T^*} b_s(t) \psi(s, t) \, dt \\
    &= \sum_{s \in S_3} \int_0^{T^*} 
        \Big(
        \min\{a_s(t), b_s(t)\}\big(\varphi(s, t) + \psi(s, t)\big)
        + |a_s(t) - b_s(t)|\, w(s, t)
        \Big)\, dt \\
    &\quad + \sum_{s \in S_1} \int_0^{T^*} a_s(t) \varphi(s, t) \, dt
        + \sum_{s \in S_2} \int_0^{T^*} b_s(t) \psi(s, t) \, dt.
    \end{align*}
    Since \(\varphi(s, t) + \psi(s, t) \leq 0\) for \(s\in S_3\), the terms involving \(\min\{a_s,b_s\}\) are nonpositive. Hence
    \begin{align*}
    J(\varphi, \psi) 
    &\leq \sum_{s \in S_3} \int_0^{T^*} |a_s(t) - b_s(t)|\, w(s, t)\,dt
    + \sum_{s \in S_1} \int_0^{T^*} a_s(t) \varphi(s, t)\,dt \\
    &\quad + \sum_{s \in S_2} \int_0^{T^*} b_s(t) \psi(s, t)\,dt \\
    &= \int_0^{T^*} \left( 
        \sum_{s \in S_3} |a_s(t) - b_s(t)|\, w(s, t) 
        + \sum_{s \in S_1} a_s(t) \varphi(s, t) 
        + \sum_{s \in S_2} b_s(t) \psi(s, t)
        \right) dt.
    \end{align*}
    By the definition of \(\tilde{v}\), the right-hand side can be rewritten as
    \begin{align*}
    J(\varphi, \psi) 
    &= \int_0^{T^*} \left( 
        \sum_{s \in N} a_s(t) \tilde{v}(s, t) 
        - \sum_{s' \in N'} b_{s'}(t) \tilde{v}(s', t) 
        \right) dt.
    \end{align*}

    Since \(a_s,b_{s'}\in G_K\) and \(\mathcal{P}\) is the orthogonal projection onto \(G_K\), we have, as in~\eqref{projection identity},
    \[
    \int_0^{T^*} a_s(t) \tilde{v}(s,t)\,dt
    = \int_0^{T^*} a_s(t)\,\mathcal{P}\big(\tilde{v}(s,\cdot)\big)(t)\,dt,
    \quad s\in N,
    \]
    and similarly for the terms involving \(b_{s'}\). Therefore,
    \begin{align*}
    J(\varphi,\psi)
    &= \int_0^{T^*} \left( 
        \sum_{s \in N} a_s(t) \mathcal{P}\big(\tilde{v}(s,\cdot)\big)(t) 
        - \sum_{s' \in N'} b_{s'}(t) \mathcal{P}\big(\tilde{v}(s',\cdot)\big)(t) 
        \right) dt \\
    &= \int_0^{T^*} \left( 
        \sum_{s \in N} a_s(t) v(s, t) 
        - \sum_{s' \in N'} b_{s'}(t) v(s', t) 
        \right) dt \\
    &= \mathcal{R}_1(v) - \mathcal{R}_2(v),
    \end{align*}
    where in the last step we used the representation of \(\mathcal{R}_i\) in terms of the sources.  
    This proves~\eqref{para J and v}.

\end{proof}
 
We are now ready to state and prove the main stability estimate.
\begin{thm}
Let \(\mu,\nu \in \Theta_{M,K}\) be two probability measures. Then there exists a test function \(v^*\in\mathcal{V}_t\) such that
\begin{equation}\label{final para result}
    \mathcal{T}_c(\mu, \nu)
    \leq \big|\mathcal{R}_1(v^*)-\mathcal{R}_2(v^*)\big|
    \leq C_5(\eta_1,\eta_2,K,M,\Omega,T^*,T,c)\,
        \|u_1-u_2\|_{L^2(\Sigma)}.
\end{equation}
\end{thm}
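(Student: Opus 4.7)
My plan mirrors the elliptic stability proof but requires one extra ingredient: the interior integral at time $T^*$ appearing in the definition~\eqref{parafunctionalR} of $\mathcal{R}_i$ must be transferred to the lateral boundary $\Sigma^+$ via the controllability of Lemma~\ref{lemma:control estimate}. First, I would invoke Kantorovich duality together with Lemma~\ref{restrict lem} to write $\mathcal{T}_c(\mu,\nu)=J(\varphi^*,\psi^*)$ for some maximizer $(\varphi^*,\psi^*)\in\widetilde{\Phi}_c$, and then apply Lemma~\ref{para combine test functino lem} to produce a single test function $v^*\in\mathcal{V}_t$ satisfying
\[
\mathcal{T}_c(\mu,\nu)=J(\varphi^*,\psi^*)\le \mathcal{R}_1(v^*)-\mathcal{R}_2(v^*),
\]
which is already the first inequality of~\eqref{final para result}.

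I would then expand the right-hand side using~\eqref{parafunctionalR}. The boundary piece over $\Sigma^-$, namely $\int_{\Sigma^-}(u_1-u_2)\,\partial_n v^*\,d\sigma$, is controlled directly by Cauchy-Schwarz combined with~\eqref{para Bound of v}. The remaining interior term $\int_\Omega (u_1-u_2)(x,T^*)\,v^*(x,T^*)\,dx$ is the main obstacle, and the heart of the argument is to convert it into a boundary integral on $\Sigma^+$.

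For that conversion, I would apply Lemma~\ref{lemma:control estimate} to $v^*$ on $[T^*,T]$, obtaining a boundary control $\omega\in L^2(\Sigma^+)$ and a function $\psi$ solving~\eqref{control system} with $\psi(\cdot,T^*)=v^*(\cdot,T^*)$ and $\psi(\cdot,T)=0$. Assumption~\ref{H4} ensures that the sources $g_j$ vanish on $[T^*,T]$, so $u_1-u_2$ solves the homogeneous parabolic equation on $Q^+$ with $\partial_n(u_1-u_2)=0$ on $\Sigma^+$. Multiplying by $\psi$ and integrating by parts in space-time over $Q^+$ (using $\psi(T)=0$, $\psi(T^*)=v^*(T^*)$, the adjoint equation satisfied by $\psi$, and the Neumann trace $\partial_n\psi=\omega$) collapses all volume terms and produces the identity
\[
\int_\Omega (u_1-u_2)(x,T^*)\,v^*(x,T^*)\,dx=\int_{\Sigma^+}(u_1-u_2)\,\omega\,d\sigma.
\]

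Combining the two boundary contributions and applying Cauchy-Schwarz gives
\[
|\mathcal{R}_1(v^*)-\mathcal{R}_2(v^*)|\le \|u_1-u_2\|_{L^2(\Sigma)}\bigl(\|\omega\|_{L^2(\Sigma^+)}+\|\partial_n v^*\|_{L^2(\Sigma^-)}\bigr),
\]
and the control estimate~\eqref{control estimate} bounds $\|\omega\|_{L^2(\Sigma^+)}$ by $C(\Omega,q)\|v^*(\cdot,T)\|_{H^1(\Omega)}+\|\partial_n v^*\|_{L^2(\Sigma^+)}$. The final step is uniform control of the remaining norms of $v^*$: since the explicit representation~\eqref{sum of v} has unimodular time factors $e_k(t)$, the spatial bound~\eqref{uniform vk bound} underlying the proof of Theorem~\ref{para basis thm} applies at every $t\in[0,T]$, giving analogous bounds for $\|\partial_n v^*\|_{L^2(\Sigma^+)}$ and $\|v^*(\cdot,T)\|_{H^1(\Omega)}$; absorbing all prefactors and using $|S|\le 2M$ then yields the advertised constant $C_5(\eta_1,\eta_2,K,M,\Omega,T^*,T,c)$. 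I expect the main obstacle to be the integration-by-parts step: one must ensure that the $\psi$ delivered by the control problem is regular enough to justify its Neumann trace on $\Sigma^+$, and verify that the boundary contributions from $Q^-$ (via $\partial_n v^*$) and $Q^+$ (via $\omega$) combine cleanly into the single norm $\|u_1-u_2\|_{L^2(\Sigma)}$ rather than leaving a sum of two separate $L^2$ pieces.
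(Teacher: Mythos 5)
Your proposal is correct and follows essentially the same route as the paper: Kantorovich duality plus Lemma~\ref{para combine test functino lem} give the single test function $v^*$ and the first inequality, and the interior term at $t=T^*$ is converted to an integral over $\Sigma^+$ via the controlled adjoint solution of Lemma~\ref{lemma:control estimate}, with the uniform norm bounds on $v^*$ coming from the explicit construction in Theorem~\ref{para basis thm}. The regularity concern you flag for the Neumann trace of $\psi$ is resolved by the lemma itself, which delivers $\omega\in L^2(T^*,T;H^{1/2}(\partial\Omega))$.
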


The same strategy also yields OT-type stability for inverse point-mass 
initial data problems for parabolic equations; see \ref{appendix:initial-data}.

\begin{proof}
    We first show that for any \(v\in\mathcal{V}_t\),
    \begin{equation}\label{bound of Rv in para}
        |\mathcal{R}_1(v) - \mathcal{R}_2(v)| \leq C(\Omega, q)\left( \|v(T^*)\|_{H^1(\Omega)} + \|v\|_{L^2(0,T; H^1(\Omega))} \right)\|u_1 - u_2\|_{L^2(\Sigma)}.
    \end{equation}
    By definition,
    \begin{equation*}
        \mathcal{R}_1(v) - \mathcal{R}_2(v) 
        = \int_\Omega (u_1(x, T^*) - u_2(x, T^*))\,v(x, T^*)\,dx 
        + \int_{\Sigma^-} (u_1 - u_2)\,\frac{\partial v}{\partial n}\,d\sigma.
    \end{equation*}
    By Lemma~\ref{lemma:control estimate}, for the given trace \(v(\cdot,T^*)\) there exists a pair \((\psi,\omega_v)\) solving the adjoint system~\eqref{control system} on \([T^*,T]\times\Omega\), with \(\psi(\cdot,T^*) = v(\cdot,T^*)\) and such that
    \[
    \|\omega_v\|_{L^2(\Sigma^+)}
    \le C(\Omega,q)\Big(\|v(T^*)\|_{H^1(\Omega)} + \|v\|_{L^2(0,T;H^1(\Omega))}\Big).
    \]
    Multiplying the equations for \(u_1\) and \(u_2\) by \(\psi\), integrating by parts over \(Q^+\), and subtracting, we obtain
    \begin{equation*}
        \int_\Omega (u_1(x, T^*) - u_2(x, T^*))\,v(x, T^*)\,dx
        = \int_{\Sigma^+} (u_1 - u_2)\,\omega_v\,d\sigma.
    \end{equation*}
    Therefore,
    \begin{align*}
        \big|\mathcal{R}_1(v) - \mathcal{R}_2(v)\big|
        &\le \left|\int_{\Sigma^+} (u_1 - u_2)\,\omega_v\,d\sigma\right|
            + \left|\int_{\Sigma^-} (u_1 - u_2)\,\frac{\partial v}{\partial n}\,d\sigma\right| \\
        &\le C(\Omega,q)\Big(\|v(T^*)\|_{H^1(\Omega)} 
                            + \|v\|_{L^2(0,T;H^1(\Omega))}\Big)\,
            \|u_1 - u_2\|_{L^2(\Sigma)},
    \end{align*}
    where we have used the above control estimate for \(\omega_v\) and standard trace inequalities for the term on \(\Sigma^-\). This proves~\eqref{bound of Rv in para}.

    Next, we estimate \(\mathcal{T}_c(\mu,\nu)\). By Kantorovich duality,
    \begin{equation}\label{maximal def}
        \mathcal{T}_c(\mu, \nu) 
        = \max_{(\varphi, \psi) \in \widetilde{\Phi}_c} J(\varphi, \psi).
    \end{equation}
    Let \((\varphi^*, \psi^*)\) be a maximizer in~\eqref{maximal def}.  
    Applying Lemma~\ref{para combine test functino lem} with \((\varphi^*,\psi^*)\), we obtain a test function \(v^*\in\mathcal{V}_t\) such that
    \begin{equation*}
        J(\varphi^*, \psi^*) \leq \mathcal{R}_1(v^*) - \mathcal{R}_2(v^*).
    \end{equation*}
    Since \(\mathcal{T}_c(\mu,\nu) = J(\varphi^*,\psi^*) \ge 0\), it follows that
    \[
    \mathcal{T}_c(\mu,\nu)
    \le \mathcal{R}_1(v^*) - \mathcal{R}_2(v^*)
    \le \big|\mathcal{R}_1(v^*) - \mathcal{R}_2(v^*)\big|.
    \]

    Finally, from the construction in Lemma~\ref{para combine test functino lem} and the bound~\eqref{bound of v H1 para}, together with standard energy estimates for the adjoint equation, we obtain a uniform control
    \[
    \|v^*(T^*)\|_{H^1(\Omega)} 
    + \|v^*\|_{L^2(0,T;H^1(\Omega))}
    \le C_4'(\eta_1,\eta_2,K,M,\Omega,T^*,T,c),
    \]
    for some constant \(C_4'\) depending only on the indicated parameters.  
    Substituting this bound into~\eqref{bound of Rv in para} and absorbing constants into
    \[
    C_5(\eta_1,\eta_2,K,M,\Omega,T^*,T,c)
    := C(\Omega,q)\,C_4'(\eta_1,\eta_2,K,M,\Omega,T^*,T,c),
    \]
    we obtain
    \[
    \big|\mathcal{R}_1(v^*) - \mathcal{R}_2(v^*)\big|
    \le C_5(\eta_1,\eta_2,K,M,\Omega,T^*,T,c)\,
        \|u_1 - u_2\|_{L^2(\Sigma)}.
    \]
    Combining this with the previous inequality for \(\mathcal{T}_c(\mu,\nu)\) yields~\eqref{final para result}, and the proof is complete.

\end{proof}

\section{Concluding remarks}

This work develops an optimal-transport framework for the stability analysis of inverse point-source problems for variable-coefficient elliptic and parabolic equations. The central observation is that suitably constructed adjoint solutions allow Kantorovich dual potentials to be realized as boundary linear functionals, thereby linking OT geometry directly to boundary measurements. In the elliptic setting, this is accomplished through CGO-based interpolation functions with quantitative control of correction terms. In the parabolic setting, the same spatial construction is combined with boundary controllability to transfer interior information to the boundary. This yields global stability bounds that depend only on boundary data misfits. A parallel argument also applies to point-mass initial data; see ~\ref{appendix:initial-data}.

Several limitations and directions for further research remain. First, our analysis focuses on finitely many point sources with positive amplitudes and, in the parabolic case, band-limited temporal profiles. Extending the framework to more general classes of measures (e.g., measures with bounded densities, sparse superpositions of localized blobs, or signed sources) is an interesting open problem. Second, the present work assumes full boundary measurements; if one could obtain \(C^k\)-bounds for CGO solutions vanishing on portions of the boundary, analogous OT stability estimates for partial boundary data should be within reach. Third, it would be natural to investigate extensions to more general models, such as equations with advection terms, coupled systems, or Helmholtz-type equations at fixed frequency, where CGO constructions and controllability results are also available. Finally, on the numerical side, it would be of interest to explore how the OT-based stability estimates derived here can inform the design and analysis of OT-type misfit functionals in reconstruction algorithms, and to what extent the theoretical constants and separation conditions can be reflected in practical regularization strategies.


\appendix

\section{Proof of Lemma~\ref{invertible matrix thm} and Lemma~\ref{lemma:control estimate}}
\label{appendix:proofs}
In this appendix, we present detailed proofs of the two lemmas stated earlier. We begin with the proof of Lemma~\ref{invertible matrix thm}, where the construction of appropriate test functions plays a central role. These functions will be built using CGO solutions, along with precise estimates of the associated correction terms' $H^p$ norm. Furthermore, we invoke results from~\cite{fursikov_controllability_1996} to establish the existence and regularity of the boundary controls required in our analysis.

\subsection{Proof of Lemma~\ref{invertible matrix thm}}\label{appendix:proof of lemma3}

We now turn to the proof of Lemma~\ref{invertible matrix thm}. As a preliminary step, we outline the construction of a test function $v \in \mathcal{V}$ based on CGO solutions. Let us define $w(x) = \sqrt{\kappa(x)}\, v(x)$. Then the condition $v \in \mathcal{V}$ is equivalent to the function $w$ satisfying the elliptic partial differential equation
\begin{equation}\label{CGO equations}
    (\Delta + \tilde{q})\, w(x) = 0 \quad \text{in } \Omega,
\end{equation}
where $\tilde{q}$ is a complex-valued potential derived from the coefficients of the original problem. Since $\kappa\in H^{p+2}(\Omega)$ and has positive lower bound on $\Omega$, using the composition theorem for Sobolev spaces in \cite{brezis_composition_2001}, we know that 
\begin{equation*}
    \Delta \sqrt{\kappa(x)},\frac{1}{\sqrt{\kappa(x)}},\frac{1}{\kappa(x)}\in H^p(\Omega). 
\end{equation*}
On the other hand, since $p>\frac{d}{2}$, we know that $H^{p}(\Omega)$ is a Banach algebra under pointwise multiplication, which implies that
\begin{equation*}
    \tilde{q}= \frac{q(x)}{\kappa(x)} + \frac{\Delta \sqrt{\kappa(x)}}{\sqrt{\kappa(x)}}\in H^{p}(\Omega).
\end{equation*}

The construction of $w$ relies on the CGO framework developed in~\cite{bal_inverse_2010}, from which we adopt the following key lemma:

\begin{lem}\label{lemma: test function construction}
For every complex vector $\rho$ satisfying $\rho \cdot \rho=0$, suppose:
\begin{equation}\label{vector condition}
    |\rho|\geq C_1(\Omega,p)(1+||\tilde{q}||_{H^p(\Omega)}),
\end{equation}

where $C_1(\Omega,p)$ is a constant only related to $\Omega$ and $p$. Then we can construct a test function $v\in \mathcal{V}$ such that $v(x)=\frac{w(x)}{\sqrt{\kappa(x)}}$, where $w(x)=\mathrm{e}^{\rho \cdot x}\Big(1+\psi_\rho(x)\Big)$ satisfies \eqref{CGO equations}. Besides, the correction term $\psi_{\rho}$ belongs to $C^{2}(\Omega)$ and its norm satisfies:
\begin{equation}
    |\rho|\left\|\psi_\rho\right\|_{C^{1}(\Omega)}+\left\|\psi_\rho\right\|_{C^{2}(\Omega)} \leqslant C_1(\Omega,p) ||\tilde{q}||_{H^p(\Omega)}.
\end{equation}
\end{lem}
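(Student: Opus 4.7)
The plan is to construct $w$ via the classical CGO ansatz and to reduce the problem to a fixed-point equation for the correction term $\psi_\rho$ that becomes contractive once $|\rho|$ exceeds the stated threshold. Substituting $w = e^{\rho\cdot x}(1+\psi_\rho)$ into $(\Delta+\tilde q)w = 0$ and using the cancellation $\rho\cdot\rho = 0$ eliminates the $|\rho|^2$ term and leaves
\[
L_\rho\psi_\rho := (\Delta + 2\rho\cdot\nabla)\psi_\rho = -\tilde q\,(1+\psi_\rho)\quad\text{in }\Omega.
\]
The task thereby reduces to inverting $L_\rho$ on a suitable Sobolev scale, with sharp dependence on $|\rho|$.

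The key analytic input is a right inverse $G_\rho$ of $L_\rho$ that combines a one-derivative gain and a $|\rho|^{-1}$ decay. After extending $\tilde q$ compactly to $\mathbb R^d$, I would invoke the Sylvester--Uhlmann weighted $L^2$ estimate for the Fourier symbol of $L_\rho$ to produce, for every integer $s\in[0,p]$, an operator $G_\rho$ satisfying
\[
\|G_\rho f\|_{H^s(\Omega)}\le\frac{C(\Omega,s)}{|\rho|}\|f\|_{H^s(\Omega)},\qquad
\|G_\rho f\|_{H^{s+1}(\Omega)}\le C(\Omega,s)\|f\|_{H^s(\Omega)}.
\]
Recasting the corrector equation as the fixed point $\psi_\rho = -G_\rho(\tilde q(1+\psi_\rho))$ and working in the unit ball of $H^p(\Omega)$, one uses that $H^p(\Omega)$ is a Banach algebra under multiplication (valid for $p>d/2$), so that $\psi\mapsto\tilde q(1+\psi)$ is Lipschitz with constant $\lesssim\|\tilde q\|_{H^p}$. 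Composing with the $|\rho|^{-1}$ estimate for $G_\rho$ yields a contraction provided $|\rho|\ge C_1(\Omega,p)(1+\|\tilde q\|_{H^p})$, giving a unique fixed point with $\|\psi_\rho\|_{H^p(\Omega)}\le C\|\tilde q\|_{H^p}/|\rho|$.

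To upgrade the Sobolev control into the $C^k$ estimates of the lemma, I would bootstrap once using the derivative-gain inequality: since the right-hand side $-\tilde q(1+\psi_\rho)$ lies in $H^p(\Omega)$ and $G_\rho$ maps $H^p$ into $H^{p+1}$ boundedly, one obtains $\psi_\rho\in H^{p+1}(\Omega)$ with norm controlled by $\|\tilde q\|_{H^p}$. The Sobolev embedding $H^{p+1}(\Omega)\hookrightarrow C^2(\overline\Omega)$, valid precisely under $p>1+d/2$, then yields $\|\psi_\rho\|_{C^2(\Omega)}\le C\|\tilde q\|_{H^p}$. The additional $|\rho|^{-1}$ factor attached to $\|\psi_\rho\|_{C^1}$ in the final inequality follows by combining the $|\rho|^{-1}$-gain estimate for $G_\rho$ on the scale $H^p$ with the embedding $H^p(\Omega)\hookrightarrow C^1(\overline\Omega)$. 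The main technical obstacle is the construction of $G_\rho$ with the quoted $|\rho|$-dependent scaling on the bounded domain $\Omega$: this is classical on $\mathbb R^d$ through Bourgain-type weighted Fourier estimates, but transferring the bounds to $\Omega$ requires either a careful extension/cutoff procedure or a Carleman estimate adapted to the phase $e^{\rho\cdot x}$, with constants tracked explicitly in $\Omega$ and $p$ so as to match the form stated in the lemma.
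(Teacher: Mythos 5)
Your proposal is correct and follows the standard Sylvester--Uhlmann/Faddeev route: the paper itself gives no proof of this lemma but imports it directly from the CGO framework of Bal and Uhlmann, and your sketch (the ansatz with $\rho\cdot\rho=0$, the right inverse $G_\rho$ with the $|\rho|^{-1}$ gain and one-derivative smoothing, the contraction in $H^p(\Omega)$ under $|\rho|\gtrsim 1+\|\tilde q\|_{H^p}$, the bootstrap to $H^{p+1}$, and the embeddings $H^p\hookrightarrow C^1$, $H^{p+1}\hookrightarrow C^2$ for $p>1+d/2$) is precisely the argument underlying that citation. The only step you correctly flag as nontrivial, transferring the weighted Fourier estimates for $G_\rho$ to the bounded domain with constants depending only on $\Omega$ and $p$, is exactly what the cited reference supplies.
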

In addition, we derive several elementary estimates concerning the matrix $2$-norm, which will be essential in our construction.

\begin{lem}\label{lowerbound}
    Consider a general matrix $A = (a_{l,j})_{1 \leq l, j \leq m}$. Define
    \[
    \beta(A) := \min_{\|y\|_2 = 1} \|A y\|_2, \quad y \in \mathbb{C}^m.
    \]
    Then the following estimate holds:
    \begin{equation*}
       \beta(A) \geq \min_{1 \leq j \leq m} |a_{j,j}| - \Bigg( \sum_{1 \leq j \neq l \leq m} |a_{l,j}|^2 \Bigg)^{1/2}.
    \end{equation*}
    Moreover, for any matrix $B \in \mathbb{C}^{m \times m}$, we have
    \begin{equation*}
        \beta(A + B) \geq \beta(A) - \|B\|_2, \qquad \beta(AB) \geq \beta(A)\, \beta(B).
    \end{equation*}
    In particular, if $\beta(A) > 0$, then $A$ is invertible and satisfies
    \begin{equation*}
        \|A^{-1}\|_2 \leq \frac{1}{\beta(A)}.
    \end{equation*}
\end{lem}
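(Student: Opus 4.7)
The proof will be entirely linear-algebraic, treating the four assertions separately by elementary norm manipulations. The quantity $\beta(A)$ is just the smallest singular value of $A$, so each inequality reduces to applying the triangle inequality or sub-multiplicativity of the spectral norm in the right way.

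First I would establish the diagonal-versus-off-diagonal bound. Decompose $A = D + R$, where $D = \mathrm{diag}(a_{1,1},\ldots,a_{m,m})$ collects the diagonal and $R = A - D$ holds the off-diagonal part. For any $y \in \mathbb{C}^m$ with $\|y\|_2 = 1$, the triangle inequality gives
\[
\|Ay\|_2 \ge \|Dy\|_2 - \|Ry\|_2.
\]
The diagonal term satisfies $\|Dy\|_2^2 = \sum_j |a_{j,j}|^2 |y_j|^2 \ge \min_j |a_{j,j}|^2$, while for the off-diagonal term I would pass to the Frobenius bound $\|Ry\|_2 \le \|R\|_F \|y\|_2 = \bigl(\sum_{l\neq j}|a_{l,j}|^2\bigr)^{1/2}$. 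Taking the infimum over unit $y$ yields the first inequality. The use of the Frobenius norm rather than $\|R\|_2$ here is the single point where one must be slightly deliberate, since it is what matches the exact form of the sum appearing on the right.

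Next, for $\beta(A + B) \ge \beta(A) - \|B\|_2$ I would apply the triangle inequality in the form $\|(A+B)y\|_2 \ge \|Ay\|_2 - \|By\|_2 \ge \beta(A) - \|B\|_2$ on unit vectors $y$, then take the infimum. For $\beta(AB) \ge \beta(A)\beta(B)$ I would chain the definitions: $\|ABy\|_2 \ge \beta(A)\,\|By\|_2 \ge \beta(A)\beta(B)\|y\|_2$.

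Finally, if $\beta(A) > 0$, the inequality $\|Ay\|_2 \ge \beta(A)\|y\|_2$ forces $\ker A = \{0\}$, so $A$ is invertible. Setting $y = A^{-1}z$ for unit $z$ and applying the same inequality gives $\|y\|_2 \le \|Ay\|_2/\beta(A) = 1/\beta(A)$, so $\|A^{-1}\|_2 \le 1/\beta(A)$. There is no real obstacle in any step; the lemma is essentially bookkeeping, and its role is purely as a toolkit for the quantitative invertibility argument underlying Lemma~\ref{invertible matrix thm}.
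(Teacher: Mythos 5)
Your proposal is correct and follows essentially the same route as the paper: the paper likewise proves $\beta(A+B)\ge\beta(A)-\|B\|_2$ and $\beta(AB)\ge\beta(A)\beta(B)$ by the triangle inequality on unit vectors, obtains the diagonal bound by applying the first of these with $B=A-D$ (controlling the off-diagonal part by its Frobenius norm, exactly as you note), and derives invertibility and the bound on $\|A^{-1}\|_2$ from $\|Ay\|_2\ge\beta(A)\|y\|_2$. No gaps.
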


\begin{proof}
    First, for any $y \in \mathbb{C}^{m}$ with $\|y\|_2 = 1$, we have
\[
\|(A+B)y\|_2 
    \;\ge\; \|Ay\|_2 - \|By\|_2
    \;\ge\; \beta(A)\|y\|_2 - \|B\|_2\|y\|_2
    \;=\; \beta(A) - \|B\|_2.
\]
Similarly,
\[
\|ABy\|_2 
    \;\ge\; \beta(A)\|By\|_2 
    \;\ge\; \beta(A)\,\beta(B)\|y\|_2
    \;=\; \beta(A)\,\beta(B).
\]

Let $D_A := \operatorname{diag}(a_{1,1}, \dots, a_{m,m})$.  
Because $D_A$ is diagonal, it follows immediately that
\[
\beta(D_A) \;=\; \min_{1 \le j \le m} |a_{j,j}|.
\]
Therefore,
\[
\beta(A)
    \;\ge\; \beta(D_A) - \|A - D_A\|_2
    \;\ge\; \min_{1 \le j \le m} |a_{j,j}|
           - \Bigg(\sum_{1 \le j \neq l \le m} |a_{l,j}|^{2}\Bigg)^{1/2}.
\]

Finally, since $\|Ay\|_2 \ge \beta(A)\|y\|_2 > 0$ for every non-zero $y$, the matrix $A$ is invertible. Moreover,
\[
1 \;=\; \|y\|_2 
    \;=\; \|A A^{-1} y\|_2 
    \;\ge\; \beta(A)\, \|A^{-1}y\|_2,
\]
and hence
\[
\beta(A)\,\|A^{-1}\|_2 \;\le\; 1.
\]

\end{proof}
We will use the following lemma to construct the complex vector:
\begin{lem}\label{lemma:vectorb}
   For any real vector $a\in \mathbb{R}^d$, there exists another real vector $b$, such that the complex vector $\rho =a+b\mathrm{i} \in \mathbb{C}^d$ satisfying $\rho \cdot \rho=0$.
\end{lem}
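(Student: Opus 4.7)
The plan is to reduce the complex quadratic constraint $\rho\cdot\rho=0$ to a pair of real conditions on $b$, and then produce $b$ explicitly using the orthogonal complement of $a$ in $\mathbb{R}^d$.

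First, I would expand the bilinear form directly:
\[
\rho\cdot\rho = (a+b\mathrm{i})\cdot(a+b\mathrm{i}) = \bigl(|a|^2-|b|^2\bigr) + 2\mathrm{i}\,(a\cdot b),
\]
and separate real and imaginary parts. The condition $\rho\cdot\rho=0$ is therefore equivalent to the two real constraints
\[
|b|=|a| \qquad \text{and} \qquad a\cdot b=0;
\]
in other words, $b$ must be an element of $a^{\perp}\subset\mathbb{R}^d$ with the same Euclidean norm as $a$.

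Next, I would construct such a $b$ explicitly. If $a=0$, the choice $b=0$ satisfies both conditions trivially. If $a\neq 0$, then, in the setting relevant to the CGO construction of Lemma~\ref{lemma: test function construction} one has $d\ge 2$, so the orthogonal complement $a^{\perp}$ is a linear subspace of dimension $d-1\ge 1$ and contains some unit vector $e$. Setting $b:=|a|\,e$ then yields $b\in a^{\perp}$ and $|b|=|a|$, so the corresponding $\rho = a+b\mathrm{i}$ satisfies $\rho\cdot\rho=0$ as required.

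The main obstacle: there is essentially none, since this is a routine linear-algebra fact; the only point worth noting is the dimension hypothesis. For $d=1$ and $a\ne 0$, orthogonality forces $b=0$, which is incompatible with $|b|=|a|$, so the lemma genuinely requires $d\ge 2$. This is consistent with the surrounding CGO framework, where the construction is only meaningful in dimension at least two.
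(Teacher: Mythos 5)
Your proof is correct and is exactly the argument the paper implicitly relies on (it states this lemma without proof): separating $\rho\cdot\rho$ into $|a|^2-|b|^2$ and $2(a\cdot b)$ forces $b\in a^{\perp}$ with $|b|=|a|$, which is precisely consistent with the identity $\tilde r|s_j|=|b_j|=|\rho_j|/\sqrt2$ used later in the proof of Lemma~\ref{invertible matrix thm}. Your observation that the statement genuinely requires $d\ge 2$ when $a\neq 0$ is a valid and worthwhile caveat, harmless here since the CGO framework operates in dimension at least two.
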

We now present the proof of Lemma~\ref{invertible matrix thm}.

\begin{proof}
For each $s_j$, we choose $\rho_j = \tilde{r} s_j + \mathrm{i} b_j$, where $b_j$ is selected according to Lemma~\ref{lemma:vectorb}. We define
\begin{equation*}
    \tilde{v}_j(x) = \frac{w_j(x)}{\sqrt{\kappa(x)\kappa(s_j)}}, \quad \text{where } w_j(x) = e^{\rho_j \cdot x}\left(1 + \psi_j(x)\right),
\end{equation*}
and $w_j$ is the CGO solution satisfying equation~\eqref{CGO equations}.

Define the diagonal matrix
\[
P = \operatorname{diag}\left( \exp\left(\frac{\tilde{r} s_j \cdot s_j}{2} \right) \frac{1}{\sqrt{\kappa(s_j)}} \right)_{1 \leq j \leq n}.
\]
Then we can write
\[
A = P (B + \delta B) P,
\]
where
\begin{align*}
    B &= \left[ \exp(\mathrm{i} b_l \cdot s_j) \exp\left(-\frac{\tilde{r} |s_j - s_l|^2}{2}\right) \right]_{1 \leq l,j \leq n}, \\
    \delta B &= \left[ \exp(\mathrm{i} b_l \cdot s_j) \exp\left(-\frac{\tilde{r} |s_j - s_l|^2}{2}\right) \psi_l(s_j) \right]_{1 \leq l,j \leq n}.
\end{align*}

We now apply Lemma~\ref{lowerbound} to estimate $\beta(B)$:
\begin{align*}
    \beta(B) 
    &\geq \min_{1 \leq j \leq n} |\exp(\mathrm{i} b_j \cdot s_j)| 
        - \left( \sum_{1 \leq j \neq l \leq n} 
        \left| \exp\left(\mathrm{i} b_l \cdot s_j\right) 
        \exp\left(-\frac{\tilde{r} |s_j - s_l|^2}{2} \right) \right|^2 \right)^{1/2} \\
    &= 1 - \left( \sum_{1 \leq j \neq l \leq n} 
        \exp\left(-\tilde{r} |s_j - s_l|^2 \right) \right)^{1/2} \\
    &\geq 1 - \sqrt{n(n-1)} \exp\left(-\frac{\tilde{r} \eta_1^2}{2} \right),
\end{align*}
where we used the uniform bound $|s_j - s_l| \geq \eta_1$ for $j \neq l$ in the last inequality. Therefore,
\begin{equation}\label{eq:betaB}
    \beta(B) \geq 1 - n \exp\left(-\frac{\tilde{r} \eta_1^2}{2} \right).
\end{equation}

Next, we estimate $\|\delta B\|_2$. Using the definition of $\delta B$ and bounding the exponential terms by $1$, we have
\begin{align*}
    \|\delta B\|_2 
    &\leq \left( \sum_{l,j=1}^n 
        \left| \exp(\mathrm{i} b_l \cdot s_j) 
        \exp\left(-\frac{\tilde{r} |s_j - s_l|^2}{2} \right) \psi_l(s_j) \right|^2 \right)^{1/2} \\
    &\leq \left( \sum_{l,j=1}^n 
        |\psi_l(s_j)|^2 \right)^{1/2}.
\end{align*}

By Lemma~\ref{lemma: test function construction}, we have the estimate
\[
|\rho_l| = \sqrt{2} \tilde{r} |s_l| \geq \sqrt{2} \tilde{r} \eta_2 
    \geq C_1\left(1 + \|\tilde{q}\|_{H^p(\Omega)} \right),
\]
which ensures that
\[
\|\psi_l\|_{C^0(\Omega)} \leq \frac{C_1 \|\tilde{q}\|_{H^p(\Omega)}}{\sqrt{2} \tilde{r} \eta_2}.
\]
Thus, we conclude that
\begin{equation}\label{eq:deltaB}
    \|\delta B\|_2 \leq n \cdot \frac{C_1 \|\tilde{q}\|_{H^p(\Omega)}}{\sqrt{2} \tilde{r} \eta_2}.
\end{equation}

Combining \eqref{eq:betaB} and \eqref{eq:deltaB}, we apply Lemma~\ref{lowerbound} to obtain
\begin{equation*}
    \beta(B + \delta B) \geq \beta(B) - \|\delta B\|_2 
    \geq 1 - n\left( \exp\left(-\frac{\tilde{r} \eta_1^2}{2} \right) + \frac{C_1 \|\tilde{q}\|_{H^p(\Omega)}}{\sqrt{2} \tilde{r} \eta_2} \right).
\end{equation*}

Since $P$ is diagonal, we have
\[
\beta(P) = \min_{1 \leq j \leq n} 
\left( \exp\left(\frac{\tilde{r} s_j \cdot s_j}{2} \right) 
\frac{1}{\sqrt{\kappa(s_j)}} \right)
\geq \frac{\exp\left(\frac{\tilde{r} \eta_2^2}{2} \right)}{\sqrt{ \|\kappa\|_{C^0(\Omega)} }}.
\]
Therefore, combining the estimates, we obtain
\begin{align*}
    \beta(A) 
    &\geq \beta(P)\, \beta(B + \delta B)\, \beta(P) \\
    &\geq \frac{\exp(\tilde{r} \eta_2^2)}{\|\kappa\|_{C^0(\Omega)}} 
    \left[ 1 - n \left( \exp\left(-\frac{\tilde{r} \eta_1^2}{2} \right) + \frac{C_1 \|\tilde{q}\|_{H^p(\Omega)}}{\sqrt{2} \tilde{r} \eta_2} \right) \right].
\end{align*}

By the definition of $\tilde{r}$, we have
\[
1 > n \left( \exp\left(-\frac{\tilde{r} \eta_1^2}{2} \right) + \frac{C_1 \|\tilde{q}\|_{H^p(\Omega)}}{\sqrt{2} \tilde{r} \eta_2} \right),
\]
which implies $\beta(A) > 0$, and hence $A$ is invertible by Lemma~\ref{lowerbound}.

It remains to estimate the $H^1$-norm of the test functions $\tilde{v}_j$. Since $\rho_j \cdot \rho_j = 0$, we observe that
\[
\tilde{r} |s_j| = |b_j| = \frac{|\rho_j|}{\sqrt{2}}.
\]
We estimate
\begin{align*}
    \|\tilde{v}_j\|_{C^1(\Omega)} 
    &\leq 3 \left\| \frac{1}{\sqrt{\kappa(x)}} \right\|_{C^1(\Omega)} 
    \left\| e^{\rho_j \cdot x} \right\|_{C^1(\Omega)} 
    \left( 1 + \|\psi_j\|_{C^1(\Omega)} \right) \\
    &\leq 3 \sqrt{2} \left\| \frac{1}{\sqrt{\kappa(x)}} \right\|_{C^1(\Omega)} 
    \tilde{r} |s_j|\, e^{\tilde{r} |s_j| R_0} 
    \left( 1 + \|\psi_j\|_{C^1(\Omega)} \right) \\
    &\leq 3 \sqrt{2} \tilde{r} |s_j| e^{\tilde{r} |s_j| R_0}\left\| \frac{1}{\sqrt{\kappa(x)}} \right\|_{C^1(\Omega)}
    \left( 1 + \frac{C_1 \|\tilde{q}\|_{H^p(\Omega)}}{\sqrt{2} \tilde{r} |s_j|} \right) \\
    &\leq 3 \sqrt{2} \tilde{r} R_0 e^{\tilde{r} R_0^2}\left\| \frac{1}{\sqrt{\kappa(x)}} \right\|_{C^1(\Omega)}
    \left( 1 + \frac{C_1 \|\tilde{q}\|_{H^p(\Omega)}}{\sqrt{2} \tilde{r} \eta_2} \right).
\end{align*}

Finally, since $\Omega$ is bounded, the $C^1$-norm controls the $H^1$-norm:
\[
\|\tilde{v}_j\|_{H^1(\Omega)} \leq \sqrt{(d + 1) |\Omega|} \cdot \|\tilde{v}_j\|_{C^1(\Omega)}.
\]
This completes the estimate for $\max_{1 \leq j \leq n} \|\tilde{v}_j\|_{H^1(\Omega)}$ and thus concludes the proof.
\end{proof}
\subsection{Proof of boundary control lemma}\label{appendix:proof of lemma4}
In this part, we give the detailed proof of Lemma \ref{lemma:control estimate}:
\begin{proof}
We first extend $\Omega$ to a larger bounded domain $\Omega_1$ and select an open subset $\Omega_0 \subset \Omega_1$ such that $\Omega_0 \cap \Omega = \emptyset$. By Theorem~5 in Chapter~3 of \cite{stein_singular_1970}, the coefficients $\kappa$ and $q$ can be extended to functions in $C^2(\Omega_1)$. Moreover, for any $v_0 \in H^1(\Omega)$, there exists an extension $v_1 \in H^1(\Omega_1)$ satisfying
\[
v_1|_{\Omega} = v_0, \quad \text{and} \quad \|v_1\|_{H^1(\Omega_1)} \leq C \|v_0\|_{H^1(\Omega)}.
\]

According to Theorem~2.1 in Chapter~1 of \cite{fursikov_controllability_1996}, for any $v_1 \in H^1(\Omega_1)$, there exists an internal control $f \in L^2\big([T^*, T] \times \Omega_0\big)$ and a corresponding solution $y$ to the following system:
\begin{align*}
\left\{
\begin{array}{rcll}
- \partial_t y + \nabla \cdot (\kappa \nabla y) - q y &=& f & \text{in } [T^*, T] \times \Omega_1, \\
\frac{\partial y}{\partial n} &=& 0 & \text{on } [T^*, T] \times \partial\Omega_1, \\
y(x, T) &=& 0 & \text{in } \Omega_1, \\
y(x, T^*) &=& v_1 & \text{in } \Omega_1.
\end{array}
\right.
\end{align*}
Furthermore, the solution $y$ satisfies the estimate:
\[
\|y\|_{L^2\big(T^*, T; H^2(\Omega_1)\big)} \leq C(\kappa, q) \|v_1\|_{H^1(\Omega_1)}.
\]

We now define the time-reversed function $\psi_1(x, t) := y(x, T - t + T^*)$, which satisfies:
\begin{align*}
\left\{
\begin{array}{rcll}
\partial_t \psi_1 + \nabla \cdot (\kappa \nabla \psi_1) - q \psi_1 &=& 0 & \text{in } [T^*, T] \times \Omega, \\
\frac{\partial \psi_1}{\partial n} &=& \omega_1 & \text{on } [T^*, T] \times \partial\Omega, \\
\psi_1(x, T^*) &=& 0 & \text{in } \Omega, \\
\psi_1(x, T) &=& v_0 & \text{in } \Omega,
\end{array}
\right.
\end{align*}
where $\omega_1 := \left. \frac{\partial y}{\partial n} \right|_{(x, T - t + T^*)}$.

By the continuity of the trace operator from $H^2(\Omega)$ to $H^{1/2}(\partial\Omega)$, we obtain the following estimate:
\begin{align*}
\|\omega_1\|_{L^2\big(T^*, T; H^{1/2}(\partial\Omega)\big)}
&= \left\| \frac{\partial y(x, T - t + T^*)}{\partial n} \right\|_{L^2\big(T^*, T; H^{1/2}(\partial\Omega)\big)} \\
&\leq C(\Omega) \|y\|_{L^2\big(T^*, T; H^2(\Omega)\big)} \\
&\leq C(\kappa, q, \Omega) \|v_0\|_{H^1(\Omega)}.
\end{align*}

We now choose $v_0 = -v(x, T)$ and define $\psi := \psi_1 + v(x, t)$. Then $\psi$ satisfies the target system \eqref{control system}. Moreover, the corresponding boundary control $\omega := \omega_1 + \frac{\partial v}{\partial n}$ satisfies the estimate \eqref{control estimate}, completing the proof.
\end{proof}

\section{OT stability for parabolic equations with point-mass initial data}
\label{appendix:initial-data}

This appendix records a companion result to the space-time source analysis 
developed in Sections~3-4. We show that the same CGO-based interpolation 
idea and duality mechanism yield an OT-type stability estimate for parabolic 
equations with point-mass initial data. The argument is a straightforward 
adaptation of the parabolic case, with the time dependence collapsing to a 
single initial-time layer. The result is included for completeness and to 
highlight the generality of the approach.

We begin with the following forward parabolic model:
\begin{align}\label{directmodel 2}
\left\{
\begin{array}{ccll}
  u_t - \Delta u + q(x)\,u &=& 0
  & \text{in } Q , \\[0.4em]
  \dfrac{\partial u}{\partial n} &=& 0 
  & \text{on } \Sigma ,\\[0.4em]
  u(x,0) &=& \displaystyle\sum_{j=1}^{m} a_j\,\delta(x-s_j)
  & \text{in } \Omega.
\end{array}
\right.
\end{align}
The available data consist of the boundary observations \(u|_{\Sigma}\).
Let \(\mu\) and \(\nu\) be two initial measures in \(\Theta_M\). 
As in the elliptic case, our goal is to derive a stability estimate of the form
\[
    \mathcal{T}_c(\mu,\nu)\leq C\|u_1-u_2\|_{L^2(\Sigma)},
\]
where \(u_1\) and \(u_2\) denote the corresponding boundary measurements.

To this end, we introduce a test function satisfying the adjoint equation
\[
    \partial_t v_1+\Delta v_1 - q v_1=0.
\]
Multiplying \eqref{directmodel 2} by \(v_1\) and integrating by parts over \(Q\), we obtain
\begin{equation}
    \int_{\Omega} \bigl[v_1(x,T)u(x,T)-v_1(x,0)u(x,0)\bigr]\,dx
    +\int_{\Sigma}u\,\frac{\partial v_1}{\partial n}\,ds=0.
\end{equation}

For any \(v\in \mathcal{V}\), the null controllability result of Lemma~\ref{lemma:control estimate} 
guarantees the existence of a boundary control \(\omega_v\) such that the following backward 
parabolic problem admits a solution:
\begin{align*}
\left\{
\begin{array}{rcll}
\partial_t v_1 + \Delta v_1 - q v_1 &=& 0 & \text{in } Q, \\
\dfrac{\partial v_1}{\partial n} &=& \omega_v & \text{on } \Sigma, \\
v_1(x, T) &=& 0 & \text{in } \Omega, \\
v_1(x, 0) &=& v & \text{in } \Omega.
\end{array}
\right.
\end{align*}
We then define the linear functional
\begin{equation}
    \mathcal{R}(v)=\int_{\Sigma}u\,\omega_v\,ds.
\end{equation}

\begin{lem}
For every \(v\in \mathcal{V}\), the following identity holds:
\begin{equation}
    \mathcal{R}(v)=\int_{\Omega} v\,d\mu
    =\sum_{j=1}^m a_j\,v(s_j).
\end{equation}
\end{lem}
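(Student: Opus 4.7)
The proof should be a direct consequence of the Green-type identity stated just before the lemma. My plan is to substitute the specific boundary and endpoint conditions of the backward adjoint problem into that identity, and then interpret the initial-time pairing as a duality between a continuous test function and a finite sum of Dirac masses.

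More precisely, I would start from
\[
\int_{\Omega}\bigl[v_1(x,T)u(x,T)-v_1(x,0)u(x,0)\bigr]dx+\int_{\Sigma}u\,\frac{\partial v_1}{\partial n}\,ds=0,
\]
which is already established in the excerpt by testing \eqref{directmodel 2} against $v_1$ and integrating by parts over $Q$. The four ingredients then fall into place: the final condition $v_1(\cdot,T)=0$ annihilates the first bulk term; the Neumann trace $\partial_n v_1 = \omega_v$ turns the boundary integral into $\mathcal{R}(v)$; the initial condition $v_1(\cdot,0)=v$ replaces $v_1(x,0)$ by $v$; and the initial data $u(\cdot,0)=\mu=\sum_{j=1}^m a_j\delta_{s_j}$ collapses the remaining bulk integral into a sum. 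Putting these together yields
\[
\mathcal{R}(v)=\int_{\Omega}v\,d\mu=\sum_{j=1}^m a_j v(s_j),
\]
which is the desired identity.

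The only nontrivial point, and the step I would spend most care on, is justifying the pairing $\int_{\Omega}v_1(x,0)\,u(x,0)\,dx = \langle v,\mu\rangle$ when the initial datum is a sum of Dirac masses. This requires that $v$ be continuous (or at least well defined) at each $s_j$, and that the formal integration by parts over $Q$ be interpreted in a distributional/duality sense rather than pointwise. Assuming $v$ lies in a space with sufficient Sobolev regularity to embed into $C(\overline{\Omega})$ (as is the case for the CGO-type test functions constructed in Lemma~\ref{invertible matrix thm} and used throughout the paper), the point evaluations $v(s_j)$ are well defined and the duality pairing makes sense. One may additionally justify the identity by a standard mollification argument: replace $\mu$ by a smooth approximation $\mu_\epsilon$, run the parabolic problem with smooth initial data, apply the Green identity in the classical sense, and pass to the limit using the continuity of $v$ at the points $s_j$ and the continuity of the boundary-to-solution map from $L^2$ initial data.

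With this justification in hand, the remainder of the argument is purely algebraic substitution, and the lemma follows. No further estimates are required at this stage, since the quantitative control on $\omega_v$, and hence on $\mathcal{R}(v)$, is already provided by Lemma~\ref{lemma:control estimate} and will be invoked separately when establishing the OT stability bound.
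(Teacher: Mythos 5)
Your proposal is correct and coincides with the argument the paper intends: the lemma is stated in the appendix without an explicit proof precisely because it follows by substituting the endpoint and boundary conditions of the backward controlled problem ($v_1(\cdot,T)=0$, $v_1(\cdot,0)=v$, $\partial_n v_1=\omega_v$, $u(\cdot,0)=\mu$) into the Green identity displayed just above it. Your additional care about interpreting $\int_\Omega v_1(x,0)u(x,0)\,dx$ as a duality pairing against the Dirac masses, justified by the continuity of the CGO-based test functions at the points $s_j$, is a legitimate refinement that the paper leaves implicit.
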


We next select \(v\) from the family of basis functions constructed in 
Theorem~\ref{basis thm} and invoke the boundary control estimates for \(\omega_v\) 
provided by Lemma~\ref{lemma:control estimate}. 
By following the same argument as in Section~\ref{Main for elliptic}, 
we arrive at the desired stability result.

\begin{thm}
Let \(\mu\) and \(\nu\) be two probability measures in \(\Theta_M\). 
Then there exists a test function \(v^*\in\mathcal{V}\) such that
\begin{equation}\label{main result1}
    \mathcal{T}_c(\mu, \nu) 
    \leq \bigl|\mathcal{R}_1(v^*)-\mathcal{R}_2(v^*)\bigr|
    \leq C(\Omega,d,T)\,\|c\|_{\infty}\,
        M\,\tilde{r}\, e^{\tilde{r}(R_0^2 - \eta_2^2)}\,
        \|u_1 - u_2\|_{L^2(\Sigma)},
\end{equation}
where
\[
\tilde{r} = 4M \left( \frac{2}{\eta_1^2} 
      + \frac{1 + C_1 \|q\|_{H^p(\Omega)}}{\sqrt{2} \eta_2} \right),
\]
and \(C_1\) is as in Lemma~\ref{invertible matrix thm}.
\end{thm}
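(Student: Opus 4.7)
The plan is to transplant the elliptic proof from Section~\ref{Main for elliptic} to the present initial-data setting, with the only new ingredient being the replacement of the elliptic boundary identity \eqref{Definition of Rv} by the control-based identity $\mathcal{R}_i(v)=\int_{\Sigma}u_i\,\omega_v\,ds$. Observe that although the governing equation is now parabolic, the admissible class $\Theta_M$ is still the elliptic one and the test functions $v$ still live in $\mathcal{V}$. Consequently, the combinatorial decomposition of the supports $N,N'$ into $S_1,S_2,S_3,S_4$, the interpolation basis of Theorem~\ref{basis thm}, and the superposition construction of Lemma~\ref{combine test functino lem} all apply verbatim.

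First I would invoke Kantorovich duality and Lemma~\ref{restrict lem} to pick a maximizer $(\varphi^*,\psi^*)\in\widetilde{\Phi}_c$ of $J(\varphi,\psi)=\mathcal{T}_c(\mu,\nu)$. Next, applying Lemma~\ref{combine test functino lem} yields an elliptic test function $v^*\in\mathcal{V}$ interpolating $\varphi^*$ on $S_1\cup S_3$ and $-\psi^*$ on $S_2\cup S_4$, so that
\[
J(\varphi^*,\psi^*)\;\le\;\mathcal{R}_1(v^*)-\mathcal{R}_2(v^*),
\]
and moreover $\|v^*\|_{H^1(\Omega)}\le C_3(\Omega,d)\,\|c\|_\infty\,|S|\,\tilde r\,e^{\tilde r(R_0^2-\eta_2^2)}$ with $|S|\le 2M$. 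This is the same construction as in the elliptic proof and requires no modification.

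The new step is to control $|\mathcal{R}_1(v^*)-\mathcal{R}_2(v^*)|$ by the boundary misfit. Using the identity $\mathcal{R}_i(v^*)=\int_{\Sigma}u_i\,\omega_{v^*}\,ds$ introduced in this appendix, Cauchy-Schwarz gives
\[
|\mathcal{R}_1(v^*)-\mathcal{R}_2(v^*)|\;\le\;\|\omega_{v^*}\|_{L^2(\Sigma)}\,\|u_1-u_2\|_{L^2(\Sigma)}.
\]
Applying the null-controllability variant of Lemma~\ref{lemma:control estimate} to $v^*$ (extended trivially in time from $[0,T]$ to use the backward adjoint setting) produces $\omega_{v^*}\in L^2(\Sigma)$ with $\|\omega_{v^*}\|_{L^2(\Sigma)}\le C(\Omega,q,T)\,\|v^*\|_{H^1(\Omega)}$. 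Substituting the bound on $\|v^*\|_{H^1(\Omega)}$ from the previous step, choosing $\tilde r$ with coefficient $4M$ exactly as in the elliptic theorem, and absorbing $C_3$ and $C(\Omega,q,T)$ into a single constant $C(\Omega,d,T)$ yields \eqref{main result1}.

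The main obstacle is the controllability step: Lemma~\ref{lemma:control estimate} is stated for the interval $[T^*,T]$ with a time-dependent $v$ whose $L^2(0,T;H^1(\Omega))$ norm enters the estimate, whereas here one needs a variant over $[0,T]$ for a stationary $v\in\mathcal{V}$. The remedy is to view $v^*$ as a smooth time-independent lift on $Q$; then $\partial_t v^*=0$, $\partial_n v^*|_{\Sigma}$ is controlled by $\|v^*\|_{H^1(\Omega)}$ through the standard trace inequality together with elliptic regularity, and the right-hand side of \eqref{control estimate} collapses to a multiple of $\|v^*\|_{H^1(\Omega)}$. Once this collapse is justified, the remainder of the argument is purely bookkeeping and parallels Section~\ref{Main for elliptic} line by line.
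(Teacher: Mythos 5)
Your proposal is correct and follows essentially the same route as the paper, which itself only sketches this appendix result by defining $\mathcal{R}(v)=\int_{\Sigma}u\,\omega_v\,ds$ and deferring to the argument of Section~\ref{Main for elliptic}. In fact you supply slightly more detail than the paper does on the one genuinely new point — that a stationary $v^*\in\mathcal{V}$ lies in $\mathcal{V}_t$ when viewed as time-independent, so Lemma~\ref{lemma:control estimate} applies on $[0,T]$ and its right-hand side collapses to a multiple of $\|v^*\|_{H^1(\Omega)}$ — and this is exactly the intended justification.
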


\bibliographystyle{elsarticle-num} 
\bibliography{ref}

\end{document}